\setlist[enumerate,1]{label=(\arabic*), ref=(\arabic*)}
\setlist[enumerate,3]{label=(\roman*), ref=(\roman*)}
\theoremstyle{plain}
\newtheorem{theorem}{Theorem}[section]
\newtheorem{lemma}[theorem]{Lemma}
\newtheorem{corollary}[theorem]{Corollary}
\newtheorem{proposition}[theorem]{Proposition}
\newtheorem{observation}[theorem]{Observation}
\newtheorem{conjecture}[theorem]{Conjecture}
\newtheorem{claim}{Claim}[section]
\newtheorem*{claim*}{Claim}
\newenvironment{claimproof}[1][Proof]{\par
	\pushQED{\qed}%
	
	\normalfont \topsep6\p@\@plus6\p@\relax
	\trivlist
	\item[\hskip\labelsep
	\textit{#1}\@addpunct{.}~]\ignorespaces
}{%
	\popQED\endtrivlist\@endpefalse
}
\newlist{Cases}{enumerate}{3}
\setlist[Cases]{parsep=0pt plus 1pt}
\setlist[Cases,1]{wide=0pt, listparindent=\parindent,
    label = \textbf{Case~\arabic*:}, ref = \arabic*}
\setlist[Cases,2]{wide=\parindent, listparindent=\parindent,
    label = \textbf{Case~\arabic{Casesi}-\arabic{Casesii}:}}
\crefname{Casesi}{case}{cases}
\newcounter{case}
\crefname{case}{case}{cases}
\theoremstyle{definition}
\newtheorem{definition}[theorem]{Definition}
\newcommand{\calP}{\mathcal{P}}
\newcommand{\ve}{\varepsilon}
\NewDocumentCommand{\xsideset}{mmme{_^}}{%
  \mathop{%
    \settowidth{\dimen0}{$\m@th\displaystyle#3$}%
    \dimen0=.5\dimen0
    \settowidth{\dimen2}{$%
      \m@th\displaystyle#3%
      \IfValueT{#4}{_{#4}}%
      \IfValueT{#5}{^{#5}}%
    $}%
    \dimen2=.5\dimen2
    \advance\dimen2 -\dimen0
    \sbox6{\scriptspace\z@$\displaystyle{\vphantom{#3}}#1$}
    \sbox8{\scriptspace\z@$\displaystyle{\vphantom{#3}}#2$}
    \ifdim\wd6>\dimen2 \kern\dimexpr\wd6-\dimen2\relax\fi
    {%
     \mathop{\llap{\copy6}{\displaystyle#3}\rlap{\copy8}}\limits
     \IfValueT{#4}{_{#4}}%
     \IfValueT{#5}{^{#5}}%
    }%
    \ifdim\wd8>\dimen2 \kern\dimexpr\wd8-\dimen2\relax\fi
  }%
}
\newcommand{\defeq}{\coloneqq}
\let\originalleft\left
\let\originalright\right
\renewcommand{\left}{\mathopen{}\mathclose\bgroup\originalleft}
\renewcommand{\right}{\aftergroup\egroup\originalright}
\title{Random matchings in linear hypergraphs}
\author{
Hyunwoo Lee%
        \thanks{Department of Mathematical Sciences, KAIST, South Korea and Extremal Combinatorics and Probability Group (ECOPRO), Institute for Basic Science (IBS).
        E-mail: {\ttfamily hyunwoo.lee@kaist.ac.kr.} Supported by the National Research Foundation of Korea (NRF) grant funded by the Korea government(MSIT) No. RS-2023-00210430, the National Research Foundation of Korea (NRF) grant funded by the Korea government(MSIT) No. 2022R1C1C1010300, and the Institute for Basic Science (IBS-R029-C4).}
}
\begin{document}
\maketitle

\begin{abstract}
    For a given hypergraph $H$ and a vertex $v\in V(H)$, consider a random matching $M$ chosen uniformly from the set of all matchings in $H.$ In $1995,$ Kahn conjectured that if $H$ is a $d$-regular linear $k$-uniform hypergraph, the probability that $M$ does not cover $v$ is $(1 + o_d(1))d^{-1/k}$ for all vertices $v\in V(H).$ This conjecture was proved for $k = 2$ by Kahn and Kim in $1998.$
    
    In this paper, we disprove this conjecture for all $k \geq 3.$ For infinitely many values of $d,$ we construct $d$-regular linear $k$-uniform hypergraph $H$ containing two vertices $v_1$ and $v_2$ such that $\calP(v_1 \notin M) = 1 - \frac{(1 + o_d(1))}{d^{k-2}}$ and $\calP(v_2 \notin M) = \frac{(1 + o_d(1))}{d+1}.$ The gap between $\calP(v_1 \notin M)$ and $\calP(v_2 \notin M)$ in this $H$ is best possible. In the course of proving this, we also prove
    a hypergraph analog of Godsil's result on matching polynomials and paths in graphs, which is of independent interest.
\end{abstract}


\section{Introduction}\label{sec:intro}

A matching in a $k$-graph ($k$-uniform hypergraph) $H$ is a set of vertex-disjoint edges of $H.$ Matching is one of the most natural objects in combinatorics, hence there are various topics related to matchings and each of them has been extensively studied. Especially, determining the maximum or average size of matchings in a given (hyper)graph is a central topic in the matching theory. For instance, Erd\H{o}s matching conjecture~\cite{erdos-matching} is one of the most important conjectures in the extremal set theory, which regards the maximum sizes of matchings in given set systems.

Among various hypergraphs, (almost) regular hypergraphs form an interesting class of hypergraphs. For instance, Vizing's theorem~\cite{vizing} implies that for any $n$-vertex $d$-regular graph have a matching of size at least $\left(1 - \frac{1}{d+1} \right)\frac{n}{2}$, thus an almost perfect matching if $d$ is large.

For hypergraphs, Pippenger(unpublished), and Pippenger, Spencer~\cite{pippenger-spencer} proved that almost regular $k$-graphs with small codegrees have almost perfect matchings. This fact and its generalizations have numerous useful applications in design theory~\cite{keevash2014existence,design-iterative}, hypergraph coloring problems~\cite{efl}, Latin squares~\cite{montgomery2023proof}, extremal combinatorics~\cite{triangle-packing,n-queen}, and so on.

Another natural question is regarding the average size of matchings in the regular linear hypergraph. Let $H$ be a $d$-regular linear $k$-graph and $M$ be a matching chosen uniformly at random from the set of all matchings of $H.$
For vertices $v_1, \dots, v_s, u_1,\dots, u_t\in V(H)$, denote by $\calP_H(\overline{v_1}, \dots, \overline{v_s}| \overline{u_1}, \dots, \overline{u_t})$ the probability that $M$ does not cover any of $v_1, \dots v_s$ conditioning on the event that $M$ does not cover any of $u_1, \dots, u_s.$ In particular, $\calP_H(\overline{v_1}, \dots, \overline{v_s})$ denote the probability that $M$ does not cover any of $v_1, \dots v_s.$ Then the average size of matchings in $H$ is closely related to the values of $\calP_H(\overline{v}).$

For distinct vertices $u$ and $v$, the linearity of $H$ implies that the set of edges containing $u$ is disjoint from the set of edges containing $v$ possibly except one. Moreover, $d$-regularity ensures that these sets have the same size. From these facts, it is reasonable to suspect that $u$ and $v$ are indistinguishable in terms of the number of matchings containing them. Indeed, Kahn~\cite{kahn1995,kahn-1997} proposed the following conjecture stating that this suspicion is true.

\begin{conjecture}[Kahn~\cite{kahn1995,kahn-1997}]\label{conj:kahn}
    Let $H$ be a $d$-regular linear $k$-graph. Then the following holds for all $v \in V(H).$
    $$\calP_H(\overline{v}) = (1 + o_d(1))d^{-1/k}.$$
\end{conjecture}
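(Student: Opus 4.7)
The plan is to adapt the matching polynomial / path-tree machinery that Heilmann--Lieb, Godsil, and Kahn--Kim used in the graph case ($k=2$). Set $Z(H) \defeq \sum_{i \geq 0} m_i(H)$ for the total number of matchings (counting the empty matching), so that $\calP_H(\overline{v}) = Z(H-v)/Z(H)$, and introduce the matching polynomial
\[
    \mu_H(x) \defeq \sum_{i \geq 0} (-1)^i m_i(H)\, x^{n - ki}.
\]
The first step is to express $\calP_H(\overline{v})$ as a ratio $\mu_{H-v}(x_0)/\mu_H(x_0)$ at a distinguished evaluation point $x_0$ (for graphs the relevant point is purely imaginary, and a similar substitution should work here).

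Second, I would invoke the hypergraph analog of Godsil's identity advertised in the abstract. The natural \emph{path tree} $T_v(H)$ is built from non-backtracking hyperpaths rooted at $v$: the root has $d$ hyperedge-children, each hyperedge splits into $k-1$ vertex-children (its remaining members), and each such vertex branches along its $d-1$ other incident hyperedges. Linearity combined with $d$-regularity makes every local branching depend only on $d$ and $k$, so $T_v(H)$ is canonically the $(d,k)$-branching tree, independent of $v$. The Godsil-type identity should then reduce $\mu_{H-v}(x_0)/\mu_H(x_0)$ to the corresponding ratio on $T_v(H)$, at which point the problem becomes a one-variable fixed-point computation on a deterministic, vertex-transitive tree.

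Third, solve the recursion on the tree. Let $q$ be the probability that a non-root vertex is uncovered by the matching on the subtree hanging below it, and let $p = \calP_{T_v(H)}(\overline{v})$. A cavity computation gives
\[
    q = \frac{1}{1 + (d-1)\, q^{k-1}}, \qquad p = \frac{1}{1 + d\, q^{k-1}}.
\]
For large $d$ the first equation forces $q^k \sim 1/(d-1)$, hence $q \sim d^{-1/k}$, and therefore $p \sim 1/(1 + d^{1/k}) \sim d^{-1/k}$, matching the conjectured asymptotic exactly.

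The hard part --- and the place I expect the proposal to break --- is justifying that the tree computation actually governs $\calP_H(\overline{v})$. For graphs, Heilmann--Lieb real-rootedness of $\mu_G$ combined with Godsil's identity gives an \emph{exact} equality $\mu_G/\mu_{G-v} = \mu_{T_v(G)}/\mu_{T_v(G)-v}$ at every real $x$, which lets one evaluate at the point encoding the uniform matching measure. For $k \geq 3$ the hypergraph matching polynomial need not be real-rooted, and a hypergraph Godsil identity is plausibly only a divisibility statement, or an identity of polynomials that does not transfer cleanly to the single evaluation $Z(H-v)/Z(H)$. Any slack in this last step leaves room for global, non-tree features of $H$ to shift $\calP_H(\overline{v})$ away from the tree value $d^{-1/k}$, and is precisely the gap through which a counterexample of the form promised in the abstract could (and, apparently, does) enter.
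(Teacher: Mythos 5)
This statement is a \emph{conjecture} that the paper exists to \emph{disprove} for all $k \geq 3$ (Theorem~\ref{thm:main}), so no proof of it can be correct; your proposal necessarily fails, and it is worth pinpointing exactly where. You guessed the break would be in transferring the Godsil identity to hypergraphs, but that part actually survives: the paper proves an exact hypergraph analog (Theorem~\ref{thm:walk-tree} and Corollary~\ref{cor:probability-converting}), via ``conflict-free walks,'' giving $\calP_H(\overline{v}) = \calP_{T(H,v)}(\overline{V})$ with no real-rootedness needed --- one only needs the identity of rational functions evaluated where it encodes the ratio of matching counts.

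The real failure is in your step 3, the ``cavity computation.'' First, the walk-tree is not the canonical $(d,k)$-branching tree independent of $H$ and $v$: its subtrees are walk-trees of vertex-deleted subhypergraphs $H - \{v, u_{(i,1)},\dots\}$, which are no longer regular, so the tree remembers global structure. Second, and decisively, even on a homogeneous tree the recursion does not converge to the fixed point. The one-step map is $g_d(x) = \bigl(1 + (d-1)x^{k-1}\bigr)^{-1}$ with fixed point $\alpha_d \approx d^{-1/k}$, and one computes $g_d'(\alpha_d) = -(k-1)(d-1)\alpha_d^{k} \approx -(k-1)$. For $k = 2$ this is $\approx -1$ (marginally stable, which is why Kahn--Kim works), but for $k \geq 3$ the fixed point is \emph{repelling}: iterating $g_d$ from the leaf boundary conditions upward, the values oscillate away from $\alpha_d$ toward the stable $2$-cycle $\{\gamma_d, \beta_d\}$ of $f_d = g_d \circ g_d$, with $\gamma_d = \frac{1+o_d(1)}{d+1}$ and $\beta_d = 1 - \frac{1+o_d(1)}{d^{k-2}}$ (Lemma~\ref{lem:f-three-solution}, Observation~\ref{obs:attractors-position}, Lemma~\ref{lem:attractor}). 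Your equation $q = 1/(1+(d-1)q^{k-1})$ picks out the unstable fixed point by fiat, but nothing forces the subtree probabilities to equal it; indeed Claim~\ref{clm:irrational} shows $\alpha_d$ is irrational while every $\calP_F(\overline{\mathrm{head}(F)})$ is rational, so the iteration always starts off the fixed point and is then driven to the $2$-cycle. The paper's counterexample realizes deep levels of exactly such a tree as an honest $d$-regular linear $k$-graph, producing vertices with $\calP_H(\overline{v})$ near $\beta_d$ and near $\gamma_d$ respectively, both far from $d^{-1/k}$.
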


To understand the meaning of the number $d^{-1/k}$, assume $\calP_H(\overline{v}) = x$ for all $v\in V(H).$
Observe that for each edge $e = \{v_1, \dots, v_k\}\in E(H)$, we have $\calP_H(e\in M) = \calP_H(\overline{v_1}, \dots, \overline{v_k}).$
For a vertex $v\in V(H),$ let $e_1, \dots, e_d$ be the edges of $H$ containing $v$, where $e_i = \{v, v_{(i, 1)}, \dots, v_{(i, k-1)}\}$ for each $i\in [d].$ 
Then we have
$$\calP_H(\overline{v}) + \sum_{v\in e\in E(H)} \calP_H(e\in M) = \calP_H(\overline{v}) + \sum_{i\in [d]} \calP_H\left(\overline{v}, \overline{v_{(i, 1)}}, \dots, \overline{v_{(i, k-1)}}\right) = 1.$$

By dividing $\calP_H(\overline{v})$ on both sides, we obtain 
$$1 + \sum_{i\in [d]} \calP_H\left(\overline{v_{(i, 1)}}, \dots, \overline{v_{(i, k-1)}} \middle| \overline{v}\right) = \calP_H(\overline{v})^{-1}.$$

We note that $\calP_H\left(\overline{v_{(i, 1)}}, \dots, \overline{v_{(i, k-1)}} \middle| \overline{v}\right) = \calP_{H - v}\left(\overline{v_{(i, 1)}}, \dots, \overline{v_{(i, k-1)}}\right).$ As the linearity of $H$ implies that deleting $v$ does not change the set of edges containing $v_{(i, j)}$ much, it is reasonable to suspect $\calP_H\left(\overline{v_{(i, 1)}}, \dots, \overline{v_{(i, k-1)}} \middle| \overline{v}\right) = x^{k-1}.$
From this, we obtain the equality, 
$x = \frac{1}{1 + d x^{k-1}},$ which has the unique positive solution $(1 + o_d(1))d^{-1/k}.$ This provides a heuristic justification for the value $d^{-1/k}$ in \Cref{conj:kahn}.

Indeed, Kahn and Kim~\cite{kahn-kim} verified \Cref{conj:kahn} for graphs (which are linear $2$-graphs) and also provided the asymptotically correct value of the variance of the size of random matchings.

\begin{theorem}[Kahn and Kim~\cite{kahn-kim}]\label{thm:kahn-kim}
    Let $G$ be an $n$-vertex $d$-regular graph. Let $M$ be a random matching chosen uniformly at random from the set of all matchings of $G.$ Then we have the following for all $v\in V(G).$

    \begin{enumerate}
        \item[$(a)$] $\calP_G(\overline{v}) = (1 + o_d(1))d^{-1/2}$, so the expectation of $|M|$ is $\left(1 - \frac{1 + o_d(1)}{\sqrt{d}} \right)\frac{n}{2}$,
        \item[$(b)$] the variance of $|M|$ is $(1 + o_d(1))\frac{n}{4\sqrt{d}}.$
    \end{enumerate}
\end{theorem}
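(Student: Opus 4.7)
\begin{proofsketch}[Proof proposal]
The plan is to use Godsil's identity for the matching polynomial to reduce the uncover probability on $G$ to a tractable recursion on a tree. Let $\mu(G, x) = \sum_k m_k(G) x^k$ be the matching generating polynomial, so that
\[
    \calP_G(\overline{v}) \;=\; \frac{\mu(G - v, 1)}{\mu(G, 1)}.
\]
By Godsil's theorem, this ratio is preserved when $G$ is replaced by its path-tree $T(G, v)$ rooted at $v$. It therefore suffices to analyze $\calP_{T(G,v)}(\overline{v})$.

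For any rooted tree $T$ with root $v$, children $u_1, \dots, u_r$, and child-subtrees $T_1, \dots, T_r$, a direct deletion computation on $\mu(\cdot, 1)$ yields the clean recursion
\[
    \calP_T(\overline{v}) \;=\; \frac{1}{1 + \sum_{i=1}^{r} \calP_{T_i}(\overline{u_i})}.
\]
In $T(G,v)$ the root has $d$ children and every internal non-root vertex has exactly $d-1$ children, so iterating one level gives $\calP_{T(G,v)}(\overline{v}) \approx 1/(1 + dq)$, where $q$ is the essentially translation-invariant uncover probability at a depth-$1$ vertex. Since $q$ satisfies $q \approx 1/(1 + (d-1)q)$, we get $q = (1 + o_d(1))/\sqrt{d}$, and hence $\calP_G(\overline{v}) = (1 + o_d(1))/\sqrt{d}$. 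Turning the heuristic into a rigorous bound uses monotonicity of the recursion in its arguments together with rapid contraction of $x \mapsto 1/(1 + dx)$ near its fixed point, so that truncation and leaf effects contribute only lower-order errors.

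For part (b), set $X_v = \mathbf{1}[v \notin M]$, so that $|M| = (n - \sum_v X_v)/2$ and $\mathrm{Var}(|M|) = \tfrac{1}{4}\sum_{u,v} \mathrm{Cov}(X_u, X_v)$. The diagonal contribution is $\sum_v p_v(1 - p_v) = (1 + o_d(1))\, n/\sqrt{d}$ by part~(a). For the off-diagonal terms I would express $\mathrm{Cov}(X_u, X_v)$ as a combination of ratios $\mu(G - S, 1)/\mu(G, 1)$ for $S \subseteq \{u, v\}$, transfer each via Godsil to a path-tree, and show that these ratios factor approximately when $u$ and $v$ are far apart, with error decaying exponentially in their graph distance. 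Summing over pairs would then yield an off-diagonal total of size $o(n/\sqrt{d})$, leaving the claimed variance $(1 + o_d(1))\, n/(4\sqrt{d})$.

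The main obstacle is the variance step: proving correlation decay $|\mathrm{Cov}(X_u, X_v)| \lesssim \rho^{\mathrm{dist}(u,v)}$ with $\rho$ small enough (in $d$) to beat the $d^{\mathrm{dist}}$-type growth in the number of vertices at a given distance from a fixed vertex. This is precisely where the stability of the fixed point $x = 1/\sqrt{d}$ of the tree recursion must be made quantitative, and it is the nontrivial input beyond the clean marginal calculation in (a).
\end{proofsketch}
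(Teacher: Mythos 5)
First, a point of context: the paper does not prove this theorem at all --- it is quoted from Kahn and Kim~\cite{kahn-kim} as a known result, and the paper only remarks (in \Cref{sec:intro}) that the main idea of their proof is exactly the reduction you propose, namely Godsil's path-tree identity followed by a recursive analysis of $\calP_{T(G,v)}(\overline{v})$. So your overall strategy for part $(a)$ is the historically correct one, and your tree recursion $\calP_T(\overline{v}) = (1 + \sum_i \calP_{T_i}(\overline{u_i}))^{-1}$ is the $k=2$ case of the paper's \Cref{lem:components-join}.

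However, two steps in your sketch of $(a)$ are wrong as stated, and they are precisely where the real work lies. First, it is not true that every internal non-root vertex of the path-tree has exactly $d-1$ children: a node corresponding to a path $P$ ending at $u$ has $d - |N(u)\cap P|$ children, which can be anywhere between $0$ and $d-1$; only for depths $o(d)$ is the branching guaranteed to be $(1-o(1))d$. Second, and more seriously, the map $x \mapsto g(x) = 1/(1+(d-1)x)$ is \emph{not} rapidly contracting near its fixed point $\alpha = (1+o(1))d^{-1/2}$: one computes $|g'(\alpha)| = (d-1)\alpha^2 = 1-\alpha = 1 - \Theta(d^{-1/2})$, so each level of the recursion shrinks the error only by a factor $1-\Theta(d^{-1/2})$, and one must descend $\Omega(\sqrt{d}\log d)$ levels into the path-tree before the boundary values are forgotten. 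Reconciling this slow contraction with the level-dependent branching numbers is the actual content of the Kahn--Kim argument, and your sketch assumes it away. (Indeed, the present paper's whole point is that for $k\geq 3$ the analogous one-step map is not even eventually contracting toward $\alpha_d$ --- the two-step map $f_d$ acquires two stable fixed points $\gamma_d,\beta_d$ --- so the stability you invoke is special to $k=2$ and must be proved, not asserted.) For part $(b)$ you correctly identify correlation decay as the obstacle, but what you have is a plan rather than a proof: the decay rate $\rho$ must beat the $(d-1)^{t}$ growth of spheres \emph{and} produce a total off-diagonal contribution $o(n/\sqrt{d})$, which again hinges on the same quantitative $1-\Theta(d^{-1/2})$ contraction estimate. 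As it stands the proposal reproduces the known heuristic but does not close these gaps.
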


They further conjectured the following, which is stronger than \Cref{conj:kahn}.
\begin{conjecture}[Kahn and Kim~\cite{kahn-kim}]\label{conj:kahn-kim}
    Let $k \geq 2$ be an integer and $H$ be an $n$-vertex $d$-regular linear $k$-graph. Let $M$ be a random matching chosen uniformly at random from the set of all matchings of $H$ Then we have the following for all $v\in V(H).$

    \begin{enumerate}
        \item[$(a)$] $\calP_G(\overline{v}) = (1 + o_d(1))d^{-1/k},$
        \item[$(b)$] the variance of $|M|$ is $(1 + o_d(1))\frac{n}{k^2 d^{1/k}}.$
    \end{enumerate}
\end{conjecture}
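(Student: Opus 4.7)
The plan is to promote the heuristic derivation of $d^{-1/k}$ given in the introduction into a rigorous argument, by establishing that for every $d$-regular linear $k$-graph $H$ and every vertex $v$, the quantity $x_H(v) := \calP_H(\overline{v})$ is close to the positive fixed point $x^{\ast} = x^{\ast}(d,k)$ of the equation $x = 1/(1 + dx^{k-1})$, which satisfies $x^{\ast} = (1+o_d(1))d^{-1/k}$. The starting point is the exact identity
\begin{equation*}
    \frac{1}{x_H(v)} \;=\; 1 \;+\; \sum_{i=1}^{d} \calP_{H-v}\!\left(\overline{v_{(i,1)}},\dots,\overline{v_{(i,k-1)}}\right)
\end{equation*}
derived in the introduction. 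If each summand can be shown to equal $(x^{\ast})^{k-1}(1+o_d(1))$, and if one has a coarse two-sided bound $x_H(v) \asymp d^{-1/k}$ (obtainable from Bregman-type estimates on matching counts), then the fixed-point equation pins down $x_H(v)$ uniformly, yielding part (a). The case $k=2$ is exactly \Cref{thm:kahn-kim}(a) and would serve both as the base case and as a blueprint for the structure of the argument.

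The decisive substep is a correlation-decay estimate asserting the approximate factorisation
$\calP_{H-v}(\overline{v_{(i,1)}},\dots,\overline{v_{(i,k-1)}}) \approx \prod_{j=1}^{k-1}\calP_{H-v}(\overline{v_{(i,j)}})$. I would expand the left-hand side as a telescoping product of conditional probabilities and, at each step, compare the measure on matchings in $H-v$ conditioned on $\overline{v_{(i,1)}},\dots,\overline{v_{(i,j-1)}}$ to the unconditional matching measure on the induced subhypergraph obtained by deleting these vertices. Linearity ensures that the edge-neighbourhoods of $v_{(i,1)},\dots,v_{(i,k-1)}$ in $H-v$ overlap pairwise in at most one edge, so a switching/injection between matchings that do and do not cover a chosen $v_{(i,j)}$ should let one approximate each conditional factor by $x^{\ast}(1+o_d(1))$, with the errors summing controllably over the $d$ edges at $v$.

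Given part (a), part (b) would be derived by a martingale-difference decomposition in the style of Kahn--Kim's graph proof. Writing $|M| = \tfrac{1}{k}\sum_v \mathbf{1}[v \in M]$ and revealing the matching through an Efron--Stein / Azuma exposure, the variance becomes a sum of $n$ local contributions. By part (a) each contribution is asymptotically the variance of a Bernoulli with parameter $d^{-1/k}$, that is, $d^{-1/k}(1+o_d(1))$. The $1/k^2$ from the prefactor combined with summation over $n$ terms then produces $(1+o_d(1))\,n/(k^2 d^{1/k})$. Converting this heuristic into a proof requires quantitative correlation decay between non-cover events at distinct vertices — essentially the same technology demanded for (a), but applied to pairs of faraway vertices rather than to $k-1$ co-neighbours along a single edge.

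The principal obstacle is the correlation-decay step itself when $k \geq 3$. In the graph case $k=2$ each edge through $v$ contains exactly one other vertex, so the factorisation above is trivial and one is left with the comparatively mild task of comparing $\calP_{H-v}(\overline{u})$ with $\calP_H(\overline{u})$, which Kahn and Kim handle by direct entropy calculations. For $k\geq 3$ one must jointly control $k-1$ co-neighbours that, although pairwise almost-disjoint by linearity, can interact through short hyperpaths of length $\geq 2$ with no graph analogue; ruling out accumulation of such interactions uniformly over all $H$ appears to be the real heart of the problem. Any robust approach probably requires a structural identity relating $\calP_H(\overline{v})$ to a sum over paths through $v$ (a hypergraph analogue of Godsil's matching-polynomial/path-tree identity), and the viability of the entire plan hinges on whether such an identity can be made tight enough to preserve the $(1+o_d(1))$ factor.
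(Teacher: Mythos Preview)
The statement you are trying to prove is a conjecture that the paper \emph{disproves} for all $k \geq 3$ (\Cref{thm:main}), so your plan necessarily contains a genuine gap. It lies in the bootstrapping step. The ``coarse two-sided bound $x_H(v) \asymp d^{-1/k}$'' you invoke is simply false: \Cref{thm:main-tight} shows the correct range of $\calP_H(\overline{v})$ over $d$-regular linear $k$-graphs is $[\tfrac{1}{d+1},\, 1 - \tfrac{1-\ve}{d^{k-2}}]$, and \Cref{thm:main} shows both endpoints are essentially attained. No Bregman-type count can give you $\asymp d^{-1/k}$, because that estimate does not hold.

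More fundamentally, even where correlation decay holds \emph{perfectly} --- and in the paper's hypertree-like constructions it does, exactly, via \Cref{lem:components-join} --- your scheme tacitly assumes the fixed point $x^{\ast}\approx d^{-1/k}$ of $g_d(x) = 1/(1+(d-1)x^{k-1})$ is attracting, so that feeding approximate values back through the identity contracts the error. \Cref{lem:f-three-solution} and \Cref{lem:attractor} show the opposite when $k\ge 3$: the two-step map $f_d = g_d\circ g_d$ has three fixed points $\gamma_d<\alpha_d<\beta_d$ with $\alpha_d=x^{\ast}$ \emph{repelling} and $\gamma_d\approx\tfrac{1}{d+1}$, $\beta_d\approx 1-\tfrac{1}{d^{k-2}}$ attracting. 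This instability is precisely what separates $k\ge 3$ from $k=2$, where $g_d$ contracts near $x^{\ast}$ and Kahn--Kim's argument goes through. The hypergraph Godsil identity you anticipate in your final paragraph is indeed established in the paper (\Cref{thm:walk-tree}), but it is used to derive the wide bounds of \Cref{thm:main-tight}, not to salvage the conjecture.
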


\Cref{thm:kahn-kim} implies that the statistics for the random matchings of large regular graphs hardly depend on their specific structures. Surprisingly, this is no longer true for hypergraphs.

\begin{theorem}\label{thm:main}
    Let $k > 2$ be an integer and $d_0, \ve > 0$ be a positive real number. Then there exist $d > d_0$ and $d$-regular linear $k$-graph $H$ such that the following holds. There are two vertices $v_1, v_2 \in V(H)$ which satisfies the following.
    \begin{enumerate}
        \item[$\bullet$] $\calP_H(\overline{v_1}) > 1 - \frac{1 + \ve}{d^{k - 2}},$
        \item[$\bullet$] $\calP_H(\overline{v_2}) < \frac{1 + \ve}{d+1}.$
    \end{enumerate}
\end{theorem}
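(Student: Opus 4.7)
The plan is to establish Theorem~\ref{thm:main} in two steps: first, prove a hypergraph analog of Godsil's identity that expresses the matching polynomial of a linear $k$-graph at a vertex in terms of a \emph{path hypertree}; second, use this identity to reduce the desired probability bounds to computations on an explicit finite hypertree, which can then be realised as the local structure of a suitable $d$-regular linear $k$-graph. Throughout, the governing recursion one exploits is
\[
    \calP_H(\overline{v})^{-1} = 1 + \sum_{v\in e\in E(H)} \calP_{H-v}\left(\overline{e\setminus\{v\}}\right),
\]
whose positive fixed point $p^*$ satisfying $1 + d(p^*)^{k-1} = 1/p^*$ yields the Kahn value $p^*\sim d^{-1/k}$. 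The whole point is to push $v_1$ and $v_2$ as far as possible from this fixed point via boundary conditions on their path hypertrees.

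The first step is the hypergraph Godsil identity. The path hypertree $T(H,v)$ I would use has nodes corresponding to non-repeating hyperwalks in $H$ starting at $v$, with hyperedges corresponding to one-edge extensions of such walks; my target is
\[
    \frac{\mu_H(x)}{\mu_{H-v}(x)} = \frac{\mu_{T(H,v)}(x)}{\mu_{T(H,v)-v}(x)},
\]
for suitably chosen matching polynomials $\mu$. The argument should mirror the classical Godsil--Heilmann--Lieb ``branch and induct'' proof: peel off one edge $e$ at $v$ using the Heilmann--Lieb-style recurrence for the matching polynomial, then recognise the right-hand side as the corresponding recurrence in $T(H,v)$, using the linearity of $H$ to make sure that different edges at $v$ contribute independently after a single removal. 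Evaluating at $x=1$ then converts the polynomial identity into an identity for $\calP_H(\overline{v})$.

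The second step is the construction. Take $H = H_1 \sqcup H_2$ as a disjoint union of two $d$-regular linear $k$-graphs, so that matchings factorise and $\calP_H(\overline{v_i}) = \calP_{H_i}(\overline{v_i})$. In $H_2$, I would arrange $v_2$ as the centre of a $d$-star, each of whose edges has its $k-1$ other vertices sitting in a large matching-rich region that is essentially decoupled from $v_2$: on the tree side this contributes $\calP_{H_2-v_2}(\overline{e\setminus\{v_2\}})\approx 1$ per edge, so the recursion yields $\calP_{H_2}(\overline{v_2}) \approx 1/(d+1)$. In $H_1$, I want the neighbours of $v_1$ to themselves play the role of $v_2$, i.e.\ to satisfy $\calP(\overline{w}) \approx 1/(d+1)$ in the appropriate conditional; then for each edge $e\ni v_1$,
\[
    \calP_{H_1-v_1}\left(\overline{e\setminus\{v_1\}}\right) \approx \left(\frac{1}{d+1}\right)^{k-1},
\]
and the recursion gives $\calP_{H_1}(\overline{v_1})^{-1} \approx 1 + d/(d+1)^{k-1} = 1 + O(d^{2-k})$, equivalently $\calP_{H_1}(\overline{v_1}) \approx 1 - 1/d^{k-2}$. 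Concretely, $H_1$ is built as a ``depth-two'' truncated hypertree around $v_1$, with additional $d$-regular linear $k$-graph blocks attached deeper down in order to restore $d$-regularity without disturbing the first two levels.

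The main obstacle is the hypergraph Godsil identity itself: in a graph, paths and the path tree are canonical, whereas in a linear $k$-graph one must choose the right notion of hyperwalk and then verify that the telescoping of $\mu_H$ along edges at $v$ is genuinely captured by a tree-side recurrence (the linearity assumption is crucial here, as it ensures that different edges at $v$ intersect in $v$ alone). Once this is in hand, the probabilistic conclusions for $H_1$ and $H_2$ reduce to bookkeeping on an explicit finite hypertree plus an error analysis showing that the padding blocks needed for $d$-regularity are asymptotically invisible to the matching polynomial at $v_1$ and $v_2$; choosing $d$ sufficiently large and padding generously then delivers the claimed $(1+\ve)$ bounds.
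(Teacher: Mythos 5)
Your overall target is right --- you want $v_1$'s neighbours to behave like $v_2$ and vice versa, which is exactly the two-cycle $\beta_d = g_d(\gamma_d)$, $\gamma_d = g_d(\beta_d)$ of the map $g_d(x) = (1+(d-1)x^{k-1})^{-1}$ that the paper exploits --- but the construction as sketched has a genuine gap: you cannot impose the boundary values locally. For $\calP_{H_2-v_2}(\overline{e\setminus\{v_2\}})\approx 1$ you need each vertex $w \in e\setminus\{v_2\}$ (which still has degree $d-1$ in $H_2 - v_2$) to satisfy $\calP(\overline{w})\approx 1$; by the recursion this forces \emph{its} neighbours to have $\calP \approx 1/(d+1)$, which forces \emph{their} neighbours to have $\calP\approx 1$, and so on to all depths. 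There is no ``matching-rich region essentially decoupled from $v_2$'': the padding is not asymptotically invisible, it is precisely what determines the answer. If at some depth you cap off with generic $d$-regular blocks, the probabilities there sit near the Kahn value $\alpha_d \approx d^{-1/k}$ (or at any rate are uncontrolled), and since $\alpha_d$ is a fixed point of $g_d$, propagating up two levels returns $\approx d^{-1/k}$ at $v_1$ and $v_2$, not the extreme values you want.

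The missing content is therefore a convergence argument for the infinite-depth alternating structure. The paper supplies this by (i) defining ``$d$-extendable'' linear $k$-graphs and an extension operation $S_d(\cdot)$ so that $\calP$ at the head evolves exactly by $p_{i+1}=g_d(p_i)$; (ii) proving that $f_d = g_d\circ g_d$ has exactly three fixed points $\gamma_d<\alpha_d<\beta_d$ on $[0,1]$, with $\gamma_d$ and $\beta_d$ attracting and $\gamma_d = (1+o_d(1))/(d+1)$, $\beta_d = 1-(1+o_d(1))/d^{k-2}$; and (iii) an irrationality argument guaranteeing the iteration does not start at the repelling fixed point $\alpha_d$, so the even and odd iterates converge to $\beta_d$ and $\gamma_d$ respectively. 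None of steps (i)--(iii) appears in your proposal, and without them the claimed boundary conditions are unjustified. A smaller remark: the Godsil-type identity you put at the centre is not needed for this direction --- the paper's construction is hypertree-like, so removing the apex disconnects the hypergraph and the recursion $\calP_H(\overline{v})^{-1} = 1+\sum_i\prod_j \calP_{H_{(i,j)}}(\overline{v_{(i,j)}})$ holds exactly by independence of components; the walk-tree machinery is reserved for the matching lower/upper bounds of the complementary tightness theorem.
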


Not only this disproves \Cref{conj:kahn} (as well as \Cref{conj:kahn-kim}), it shows that the gap between $\calP_H(\overline{v_1})$ and $\calP_H(\overline{v_2})$ can be large, one being $1 - o_d(1)$ and the other being $o_d(1).$ Indeed, the following theorem determines the correct range of $\calP_H(\overline{v})$, showing that \Cref{thm:main} is best possible.

\begin{theorem}\label{thm:main-tight}
    Let $k > 2$ be an integer and $\ve > 0$ be a real number. Then there exists $d_0 > 0$ such that for all $d \geq d_0$ and for all $d$-regular linear $k$-graph $H$, the following inequality holds for all $v\in V(H).$
    $$\frac{1}{d+1} \leq \calP_H(\overline{v}) < 1 - \frac{1 - \ve}{d^{k-2}}.$$
\end{theorem}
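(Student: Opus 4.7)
The plan is to establish the two inequalities separately; both will follow from a single elementary vertex-deletion bound on matching counts.

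Write $m(G) \defeq |\calM(G)|$ for the number of matchings of a hypergraph $G$, where $\calM(G)$ is the set of matchings of $G$. Decomposing by whether a vertex $u$ is matched,
\[
m(G) \;=\; m(G-u) \;+\; \sum_{u \in e \in E(G)} m(G - V(e)).
\]
Every matching of $G - V(e)$ is in particular a matching of $G - u$, so $m(G - V(e)) \leq m(G-u)$, and hence
\[
m(G - u) \;\geq\; \frac{m(G)}{1 + d_G(u)}. \qquad (\star)
\]
Applying $(\star)$ with $G = H$ and $u = v$ gives $\calP_H(\overline{v}) = m(H-v)/m(H) \geq 1/(d+1)$, which is the lower bound.

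For the upper bound I would use the identity already established in the introduction,
\[
\calP_H(\overline{v})^{-1} \;=\; 1 + \sum_{i=1}^{d} \calP_{H-v}\!\left(\overline{v_{(i,1)}}, \dots, \overline{v_{(i,k-1)}}\right),
\]
where $e_i = \{v, v_{(i,1)}, \dots, v_{(i,k-1)}\}$ are the edges through $v$, and lower-bound each summand via $(\star)$. For fixed $i$, the summand equals $m(H - v - \{v_{(i,1)}, \dots, v_{(i,k-1)}\})/m(H-v)$, and I will control this by deleting the $k-1$ vertices $v_{(i,j)}$ one at a time. Linearity of $H$ ensures that the unique edge containing both $v$ and $v_{(i,j)}$ is $e_i$, so $v_{(i,j)}$ has degree exactly $d-1$ in $H-v$; subsequent vertex deletions can only decrease degrees, so at each of the $k-1$ iterations the vertex being removed has degree at most $d-1$. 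Applying $(\star)$ at every step gives
\[
m\!\left(H - v - \{v_{(i,1)}, \dots, v_{(i,k-1)}\}\right) \;\geq\; \frac{m(H-v)}{d^{k-1}}.
\]

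Substituting back, $\calP_H(\overline{v})^{-1} \geq 1 + d/d^{k-1} = 1 + 1/d^{k-2}$, so
\[
\calP_H(\overline{v}) \;\leq\; \frac{d^{k-2}}{d^{k-2}+1} \;=\; 1 - \frac{1}{d^{k-2}+1},
\]
which is strictly less than $1 - (1-\ve)/d^{k-2}$ as soon as $d^{k-2} > (1-\ve)/\ve$, i.e.~for all sufficiently large $d$. I do not expect a real obstacle here; the whole argument is elementary and avoids the heavier matching-polynomial machinery used elsewhere in the paper. The only mild subtlety is the dual role of $(\star)$: it is applied once at $v$ to obtain the lower bound, and then $k-1$ times at the remaining vertices of each edge through $v$ to produce a matching upper bound, which is precisely why the two endpoints of the range line up.
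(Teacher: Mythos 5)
Your argument is correct, and it reaches the theorem by a genuinely more elementary route than the paper. The paper first builds the $k$-walk-tree machinery (the hypergraph analogue of Godsil's path-tree, \Cref{thm:walk-tree} and \Cref{cor:probability-converting}), then expands the resulting recursion two levels deep in the tree and bounds the leaf probabilities crudely by $0$ and $1$; your proof replaces all of that with the single counting inequality $m(G-u) \geq m(G)/(1+d_G(u))$, applied once at $v$ for the lower bound and $k-1$ times along each edge through $v$ for the upper bound. The two arguments are secretly the same computation — your chain of $(\star)$ applications at the vertices $v_{(i,1)},\dots,v_{(i,k-1)}$, each of degree at most $d-1$ in $H-v$ by linearity and regularity, is exactly what the paper's replacement of the second-level probabilities by $1$ accomplishes — and both land on the identical bounds $\frac{1}{d+1}$ and $1-\frac{1}{d^{k-2}+1}$. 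What your version buys is self-containedness: \Cref{thm:main-tight} follows from \Cref{prop:recursive} (which the paper proves directly, without trees) plus the trivial deletion bound, so the walk-tree construction is not logically needed here. What the paper's version buys is the general machinery itself, which is presented as a result of independent interest and mirrors how \Cref{thm:kahn-kim} was proved for graphs; the paper also notes (cf.\ \Cref{cor:average}) that the lower bound needs neither $d$ large nor $k>2$, which your derivation of $\calP_H(\overline{v}) \geq \frac{1}{d+1}$ makes transparent.
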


To prove \Cref{thm:main-tight}, we extend the celebrated result of Godsil~\cite{godsil} on matching polynomials and paths in graphs in \Cref{sec:define-tree}. One direct corollary of Godsil's result is for every graph $G$ and $v \in V(G),$ there is a tree $T$ and a vertex $u \in V(T)$ such that $\calP_G(\overline{v}) = \calP_T(\overline{u}).$
This means that we can count on $\calP_T(\overline{u})$ instead of $\calP_G(\overline{v})$ and the structure of $T$ allows us to compute $\calP_T(\overline{u})$ in a recursive manner. This was the main idea of the proof of \Cref{thm:kahn-kim}. We generalize this result into hypergraphs. This would be an independent interest.

\paragraph{Organization.}
We prove \Cref{thm:main} in \Cref{sec:construction} by explicitly constructing the regular linear $k$-graph $H$ that satisfies the conclusion of \Cref{thm:main}. In \Cref{sec:define-tree}, we extend Godsil~\cite{godsil}'s result to hypergraph by defining a new type of walk on hypergraph, that we refer to as \emph{conflict-free walk}. Using this result, we prove \Cref{thm:main-tight} in \Cref{sec:proof-main-tight}. Lastly, we discuss several concluding remarks in \Cref{sec:concluding}.


\section{Random matchings and dynamical systems}\label{sec:construction}

\subsection{$d$-extendable linear hypergraphs}
The idea to prove \Cref{thm:main} is to define an operation that can inductively construct a hypertree-like structure while keeping the degree condition. For this, we introduce some definitions.
We say a $k$-graph $H$ is \emph{$d$-extendable} if all vertices of $H$ has degree $d$ except one vertex $v$ of degree $d-1.$ We say that the vertex $v$ of degree $d-1$ is the \emph{head of $H$} and write it as $\mathrm{head}(H).$ The following lemma shows the existence of $d$-extendable linear $k$-graph.

\begin{lemma}\label{lem:base-existence}
    Let $k \geq 2$ and $\ell \geq 0$ be non-negative integers. Then there is a $(k\ell + 1)$-extendable linear $k$-graph $H.$ 
\end{lemma}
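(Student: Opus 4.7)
The base case $\ell=0$ is trivial: the $k$-graph with a single isolated vertex $h$ is vacuously $1$-extendable, since the only vertex has degree $0$ and there is nothing else to check.

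For $\ell\ge 1$, my plan is to perform a local surgery on a suitable $(k\ell+1)$-regular linear $k$-graph $H_0$. I would first invoke the standard existence of such an $H_0$, for instance via a Steiner system $S(2,k,v)$ with $v=k((k-1)\ell+1)$, whose replication number is $k\ell+1$ (Wilson's theorem handles large $\ell$; small $\ell$ can be taken care of by explicit constructions or by a general existence result for $d$-regular linear $k$-graphs). Let $H^\star$ consist of $\ell(k-1)$ vertex-disjoint copies of $H_0$, and pick one edge $e_i$ from the $i$-th copy. The matching $M=\{e_1,\dots,e_{\ell(k-1)}\}$ is then automatically \emph{strongly independent}: no two vertices drawn from different $e_i$'s share any common edge of $H^\star$, simply because the copies are vertex-disjoint.

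I would then form $H$ as follows: delete the edges of $M$ from $H^\star$, so that each of the $k\ell(k-1)$ vertices in $U:=V(e_1)\cup\dots\cup V(e_{\ell(k-1)})$ drops to degree $k\ell$ while all other vertices remain at degree $k\ell+1$. Next, adjoin a fresh vertex $h$, partition $U$ into $k\ell$ blocks $S_1,\dots,S_{k\ell}$ of size $k-1$ each, and add the $k\ell$ new edges $\{h\}\cup S_j$ for $j=1,\dots,k\ell$. Linearity is straightforward to verify: the new edges pairwise meet only at $h$; two vertices lying in a common $S_j$ either come from the same $e_i$ (whose edge has been removed) or from different disjoint copies of $H_0$; and $h$ is new. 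The degree count also closes: $h$ has degree $k\ell=(k\ell+1)-1$, each vertex of $U$ loses one edge to the deletion and regains one from the new edges (ending at $k\ell+1$), and every vertex outside $U$ is untouched.

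The main technical obstacle is the base existence of a $(k\ell+1)$-regular linear $k$-graph $H_0$, which is where one pays the nontrivial cost of Wilson's theorem (and some ad hoc work for small $\ell$). Once this ingredient is in hand, the surgery itself is a direct vertex-addition and edge-swap argument, and the rest is purely bookkeeping on degrees.
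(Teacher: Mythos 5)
Your construction is correct, but it takes a genuinely different route from the paper. The paper proceeds by induction on $\ell$: it first proves (by an elementary induction on $d$) that $d$-regular linear $k$-graphs exist for every $d$, then builds from $k-1$ copies of a $(k\ell+1)$-regular linear $k$-graph an auxiliary graph in which one new vertex has degree $k$ and all others have degree $k\ell+1$, and finally glues many copies of this gadget onto the $(k(\ell-1)+1)$-extendable graph from the previous step to raise every degree by $k$. Your argument instead performs a single non-inductive surgery: delete a matching spread over $\ell(k-1)$ vertex-disjoint copies of a regular host and reattach the $k\ell(k-1)$ freed vertices to a fresh head $h$ in blocks of size $k-1$. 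Your linearity and degree bookkeeping are sound (in particular, two freed vertices in a common block either lie in distinct copies or lie in the same deleted edge, which by linearity was their unique common edge), and your version is arguably cleaner than the paper's gluing step. The one place where you pay more than necessary is the base existence of a $(k\ell+1)$-regular linear $k$-graph: invoking Wilson's theorem is overkill and, as you note, does not directly cover small $\ell$. The paper's Claim~2.2 disposes of this in two lines --- take $k$ disjoint copies of a $(d-1)$-regular linear $k$-graph and add a perfect matching whose edges are transversal to the copies --- and substituting that for the Steiner-system citation would make your proof self-contained and uniform in $\ell$.
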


\begin{proof}
    We use induction on $\ell.$
    The base case, when $\ell = 0$, is trivial by taking a single edge with one isolated vertex. Denote this $k$-graph as $H_0.$ Now we assume we have a $(k(\ell - 1) + 1)$-extendable linear $k$-graph $H_{\ell - 1}.$

    \begin{claim}\label{clm:regular-existence}
        For every $d \geq 1$, there is a $d$-regular linear $k$-graph.
    \end{claim}

    \begin{claimproof}[Proof of \Cref{clm:regular-existence}]
        We use induction on $d.$ If $d = 1$, we take a single edge, then this is $1$-regular linear hypergraph. Now assume there is a $(d-1)$-regular linear $k$-graph $F.$ Consider $F_1, \dots, F_k$ disjoint copies of $F.$ We add a perfect matching on $V(F_1) \cup \cdots \cup V(F_k)$ where each edge of the perfect matching contains exactly one vertex of $F_i$ for each $i\in [k].$ Then the obtained hypergraph is $d$-regular and still a linear hypergraph. This proves the claim.
    \end{claimproof}

    By \Cref{clm:regular-existence}, we can take a $(k\ell + 1)$-regular linear $k$-graph $F.$
    Let $F_1, \dots, F_{k-1}$ be disjoint copies of $F.$ We choose an edge $e_i = \{v_{(i, 1)}, \dots, v_{(i, k)}\}$ of $F_i$ for each $i\in [k-1].$ We introduce a new vertex $v'$ that is disjoint from $\bigcup_{i\in [k-1]}V(F_i)$ and remove $e_i$ from $F_i$ for each $i\in [k-1].$ Now we add a new hyperedge $\{v', v_{(i, 1)}, \dots, v_{(i, k-1)}\}$ for each $i\in [k-1]$ and we finally add a hyperedge $\{v', v_{(1, k)}, \dots, v_{(k-1, k)}\}.$ Let us write this hypergraph as $H'_{\ell}.$ We note that degree of $v'$ in $H'_{\ell}$ is $k$ and the other vertices have degree $k\ell + 1.$
    Let $H''_{\ell}$ be a $k$-graph which is disjoint union of $|V(H_{\ell})|$ copies of $H'_{\ell - 1}.$ Let $V$ be the set of vertices whose degrees are $k.$ We define $H_{\ell}$ as a union of $H''_{\ell}$ and $H_{\ell - 1}$ on the vertex set $V.$ Then $H_{\ell}$ is a linear $k$-graph such that one vertex have degree $k\ell$ and the others have degree $k\ell + 1.$ This completes the proof. 
\end{proof}

We now define an operation that extends a $d$-extendable linear $k$-graph into a larger $d$-extendable linear $k$-graph as follows.

\begin{definition}
    Let $F$ be a $d$-extendable linear $k$-graph. Let $F'$ be a disjoint union of copies $F_{(i, j)}$ of $F$ where $i\in [d-1]$ and $j\in [k-1].$ We introduce a new vertex $v$ which is disjoint from $V(F').$ We now define a $k$-graph $S_d(F)$ as $$V(S_d(F)) = V(F')\cup \{v\}$$ and $$E(S_d(F)) = E(F')\cup \bigcup_{i\in [d-1]} \{v, \mathrm{head}(F_{(i, 1)}), \dots, \mathrm{head}(F_{(i, k-1)})\}.$$ Let $S_d^{(0)} = F$ and for non-negative integer $\ell$, we denote by $S_d^{(\ell)}(F)$ the $k$-graph $S_d(S_d^{(\ell-1)}(F)).$
\end{definition}

We note that if $F$ is a $d$-extendable linear $k$-graph, then so is $S_d(F)$ and the newly introduced vertex $v$ in the definition of $S_d(\cdot)$ becomes a new head. Thus $S_d^{(\ell)}(F)$ is well-defined.
We will use the operation $S_d(\cdot)$ iteratively to prove \Cref{thm:main}. To do this, we analyze several numerical properties that are related to $S_d(\cdot).$ 


\subsection{Attractors on extreme points}

From now on, we consider $k > 2$ to be a fixed integer. For all integer $d\geq 2$, we define a function $g_{d}(x)$ and $f_{d}(x)$ as follows.
$$g_{d}(x) = \frac{1}{1 + (d-1)x^{k-1}}$$ and $$f_{d}(x) = g_{d}(g_{d}(x)) = \left(1 + \frac{d-1}{(1 + (d-1)x^{k-1})^{k-1}} \right)^{-1}.$$
As $q(x) = x(g_{d}(x))^{-1}$ is an increasing function with $q(0) < 1 < q(1),$ there is unique solution $0 < \alpha_{d} < 1$ such that $\alpha_{d} = g_{d}(\alpha_{d}).$ It is routine to check $\alpha_{d} = (1 - o_d(1))d^{-1/k}.$
The purpose of this section is to show the dynamical system $x_{\ell + 1} = f_d(x_{\ell})$ starting from $x_0 \in [0, 1]$ has two attractors one in $(0, \alpha_d)$ and another in $(\alpha_d, 1).$

\begin{lemma}\label{lem:f-three-solution}
    For sufficiently large $d$, the equation $x = f_d(x)$ defined on the interval $[0, 1]$ has exactly three solutions $0 < \gamma_d < \alpha_d < \beta_d < 1$.
\end{lemma}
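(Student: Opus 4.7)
The plan is to treat $\alpha_d$ (which is automatically a solution, being a fixed point of $g_d$) as the middle root, and to locate the other two solutions by analyzing the graph of $h(x) \defeq f_d(x) - x$ on $[0,1]$.

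First I would show that $\alpha_d$ is a repelling fixed point of $f_d$ for large $d$. From $\alpha_d = g_d(\alpha_d)$ one gets $(d-1)\alpha_d^k = 1 - \alpha_d$, and plugging this into the formula for $g_d'$ gives $g_d'(\alpha_d) = -(k-1)(1 - \alpha_d)$, which tends to $-(k-1)$ as $d \to \infty$. Since $k > 2$, this exceeds $1$ in absolute value for all sufficiently large $d$, so $f_d'(\alpha_d) = g_d'(\alpha_d)^2 > 1$. Combined with $f_d(0) = g_d(1) = 1/d > 0$, $f_d(1) = g_d(1/d) < 1$, and the fact that $f_d$ is continuous and strictly increasing (as a composition of two strictly decreasing functions), the intermediate value theorem supplies at least one extra solution on each of $(0, \alpha_d)$ and $(\alpha_d, 1)$.

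For uniqueness I propose to prove that $f_d'$ is unimodal on $(0,1)$. The substitution $u = (d-1)x^{k-1}$, an increasing bijection $(0,1) \to (0, d-1)$, rewrites $f_d'$ as a positive $d$-dependent constant (in $u$) times
$$Q(u) \defeq \frac{u^{(k-2)/(k-1)}\,(1+u)^{k-2}}{\bigl((1+u)^{k-1} + (d-1)\bigr)^2}.$$
Setting $(\log Q)'(u) = 0$ and clearing denominators reduces to
$$\frac{(k^2-2k+2)u - (k-2)}{1 + ku} \;=\; \frac{(d-1)(k-2)}{(1+u)^{k-1}},$$
where the left-hand side is strictly increasing on its positivity region (its derivative simplifies to $2(k-1)^2/(1+ku)^2$) and the right-hand side is strictly positive and strictly decreasing. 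Boundary comparison forces a unique intersection, and since $Q$ vanishes at both $u = 0$ (here one uses $k > 2$) and $u \to \infty$, this critical point is the unique global maximum of $Q$; hence $f_d'$ is unimodal in $x$.

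Unimodality in hand, the conclusion is immediate: $f_d' - 1$ has exactly two zeros on $(0,1)$ (at least two because $f_d'$ vanishes at $0$, tends to $0$ at $1$, and exceeds $1$ at $\alpha_d$; at most two by unimodality), so $h$ is strictly decreasing, then strictly increasing, then strictly decreasing, with $\alpha_d$ lying in the middle monotone piece. Together with $h(0) > 0$, $h(\alpha_d) = 0$, and $h(1) < 0$, this forces exactly one zero in each of the three pieces, yielding $\gamma_d < \alpha_d < \beta_d$ and no other solutions. The main obstacle is the unimodality step: $f_d'$ is not log-concave in $x$ in any obvious way, and finding the change of variable that reduces the critical-point equation to a clean monotone-versus-monotone comparison is the key trick.
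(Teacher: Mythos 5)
Your proposal is correct, but it takes a genuinely different route from the paper. The paper never analyzes $f_d'$ globally: it exploits the factorization $f_d = g_d\circ g_d$, observing that $g_d$ maps fixed points of $f_d$ to fixed points of $f_d$ injectively and order-reversingly, so that solutions in $(0,\alpha_d)$ and in $(\alpha_d,1]$ are in bijection via $x\mapsto g_d(x)$. This reduces everything to proving uniqueness on $(\alpha_d,1]$ alone, which the paper does by an ad hoc split at $x=\tfrac12$: it shows $f_d(x)>x$ on $(\alpha_d,\tfrac12]$ by a substitution $x=\alpha_d\mu$, and $f_d'(x)<1$ on $(\tfrac12,1)$ by a crude bound, then gets existence of $\beta_d$ from the intermediate value theorem and defines $\gamma_d\defeq g_d(\beta_d)$. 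Your argument instead establishes unimodality of $f_d'$ via the substitution $u=(d-1)x^{k-1}$ and the monotone-versus-monotone comparison for the critical-point equation (I checked the algebra: the reduction to $\bigl((k^2-2k+2)u-(k-2)\bigr)/(1+ku)=(d-1)(k-2)/(1+u)^{k-1}$ and the derivative $2(k-1)^2/(1+ku)^2$ are right), together with the repelling-fixed-point computation $g_d'(\alpha_d)=-(k-1)(1-\alpha_d)$, which is also correct. Your approach is more systematic and gives more: it immediately yields the sign pattern of $f_d(x)-x$ on the four subintervals (the paper's Observation 3.4) and the instability of $\alpha_d$, whereas the paper's symmetry trick is shorter and hands you the useful relation $\gamma_d=g_d(\beta_d)$ for free, which makes the asymptotics of $\gamma_d$ in Observation 3.4 a one-line consequence of those of $\beta_d$. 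Two small points to tighten when writing it up: ``$f_d'$ tends to $0$ at $1$'' should be replaced by the statement you actually need, namely $f_d'(1)<1$ for all sufficiently large $d$ (for fixed $d$ it is a positive constant, roughly $(k-1)^2d^{2-k}$); and the step producing the extra root in $(0,\alpha_d)$ should spell out that $f_d'(\alpha_d)>1$ forces $f_d(x)<x$ just to the left of $\alpha_d$, so that the IVT applies against $f_d(0)=1/d>0$.
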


\begin{proof}[Proof of \Cref{lem:f-three-solution}]
    We first show that the equation $x = f_d(x)$ has at most one solution on the interval $(\alpha_d, 1].$
    \begin{claim}\label{clm:num-1}
        For all $x$ in the interval $(\alpha_d, \frac{1}{2}],$ the inequality $f_d(x) > x$ holds.
    \end{claim}
    
    \begin{claimproof}[Proof of \Cref{clm:num-1}]
        Let $x = \alpha_d \mu$, where $1 < \mu \leq \frac{1}{2\alpha_d}.$ Then we have 
        \allowdisplaybreaks
        \begin{align*}
            g_d(x) &= \frac{1}{1 + (d-1)\alpha_d^{k-1}\mu^{k-1}} = \frac{1}{1 + (d-1)\alpha_d^{k-1}} \times \frac{1 + (d-1)\alpha_d^{k-1}}{1 + (d-1)\alpha_d^{k-1}\mu^{k-1}}\\
            &= \alpha_d \times \frac{1/\alpha_d}{1 + (1 / \alpha_d - 1)\mu^{k-1}} = \alpha_d \left(\frac{1}{\alpha_d + (1 - \alpha_d)\mu^{k-1}}\right).
        \end{align*}
        Since $f_d(x) = g_d(g_d(x))$, we have $f_d(\alpha_d \mu) = \alpha_d\left(\alpha_d + \frac{(1 - \alpha_d)}{(\alpha_d + (1 - \alpha_d)\mu^{k-1})^{k-1}} \right)^{-1}.$ To complete the proof, it suffices to show that the following inequality holds. $$\alpha_d \mu + \frac{(1 - \alpha_d) \mu}{(\alpha_d + (1 - \alpha_d)\mu^{k-1})^{k-1}} < 1.$$
        Let $h(\mu) \defeq \alpha_d \mu + \frac{(1 - \alpha_d) \mu}{(\alpha_d + (1 - \alpha_d)\mu^{k-1})^{k-1}}.$ Then for all $1 \leq \mu \leq 2$, we have
        \begin{align*}
            h'(\mu) &= \alpha_d + \frac{(1 - \alpha_d)(\alpha_d + (1 - \alpha_d)\mu^{k-1})^{k-1} - (1 - \alpha_d)^2(k-1)^2(\alpha_d + (1 - \alpha_d)\mu^{k-1})^{k-2}\mu^{k-1}}{(\alpha_d + (1 - \alpha_d)\mu^{k-1})^{2k-2}}\\
            &= \alpha_d + \frac{(1 - \alpha_d)(\alpha_d + (1 - \alpha_d)\mu^{k-1}) - (1 - \alpha_d)^2(k-1)^2 \mu^{k-1}}{(\alpha_d + (1 - \alpha_d)\mu^{k-1})^{k}}\\
            &\leq \frac{2^{k^2}\alpha_d + (1 - \alpha_d)(\alpha_d + (1 - \alpha_d)\mu^{k-1}) - (1 - \alpha_d)^2(k-1)^2 \mu^{k-1}}{(\alpha_d + (1 - \alpha_d)\mu^{k-1})^{k}}\\
            &\leq \frac{(1 + 2^{k^2}\alpha_d - (1 - \alpha_d)^2(k-1)^2)\mu^{k-1}}{(\alpha_d + (1 - \alpha_d)\mu^{k-1})^{k}}\\
            &< 0.
        \end{align*}
        For the last inequality, we use the condition $k > 2.$ Since $h(1) = 1 \leq 1$, for all $\mu\in (1, 2]$, we have $h(\mu) < 1.$
        Now, we consider the case when $\mu \in (2, \frac{1}{2\alpha_d}].$
        Then the following inequality holds.
        \begin{align*}
            h(\mu) \leq \alpha_d \frac{1}{2\alpha_d} + \frac{\mu}{((1 - \alpha_d)\mu^{k-1})^{k-1}} \leq \frac{1}{2} + \frac{1}{8(1 - \alpha_d)^{k-1}} < 1.
        \end{align*}
        This proves the claim.
    \end{claimproof}

    \begin{claim}\label{clm:num-2}
        For all $x\in (\frac{1}{2}, 1)$, we have $f_d'(x) < 1.$
    \end{claim}

    \begin{claimproof}[Proof of \Cref{clm:num-2}]
        By computing the derivative directly, we have the following inequalities.
        \begin{align*}
            f_d'(x) &= \frac{(1 + (d-1)x^{k-1})^{2k-2}}{((d - 1) + (1 + (d-1)x^{k-1})^{k-1})^2}\times \frac{(d-1)^2(k-1)^2(1 + (d-1)x^{k-1})^{k-2}x^{k-2}}{(1 + (d-1)x^{k-1})^{2k-2}}\\
            &= (d-1)^2(k-1)^2 \times \frac{(1 + (d-1)x^{k-1})^{k-2}x^{k-2}}{((d - 1) + (1 + (d-1)x^{k-1})^{k-1})^2}\\
            &\leq \frac{(k-1)^2 d^k}{((d - 1) + (1 + (d-1)x^{k-1})^{k-1})^2}\\
            &\leq \frac{(k-1)^2 d^k}{2^{-2(k-1)^2} (d-1)^{2k-2}}\\
            &< 1.
        \end{align*}
        This proves the claim.
    \end{claimproof}

    Assume we have two distinct solutions $x_1 < x_2$ for the equation $x = f_d(x)$, where $x_1, x_2 \in (\alpha_d, 1].$ We note that $1 \neq f_d(1)$, so nither $x_1$ nor $x_2$ are $1.$ By \Cref{clm:num-1}, we have $\frac{1}{2} < x_1 < x_2 < 1.$ By \Cref{clm:num-2}, the function $f_d(x) - x$ is strictly decreasing when $x > \frac{1}{2}.$ This implies there are no two distinct solutions in the interval $(\alpha_d, 1]$, a contradiction. Thus there is at most one solution in $(\alpha_d, 1).$ Since $f_d(1) < 1$ and $f_d(\frac{1}{2}) > \frac{1}{2}$, there is a unique solution $\beta_d \in (\alpha_d , 1]$ by the intermediate value theorem.

    Now, let $\gamma_d \defeq g_d(\beta_d).$ Then it is easy to see that $\gamma_d$ is also a solution of the equation $x = f_d(x).$ As $\alpha_d$ is the unique solution of $g_d(x) = x$, $\gamma_d$ is distinct from both $\alpha_d$ and $\gamma_d.$ Since $g_d$ is decrasing function, and $\beta_d > \alpha_d$, this implies $0 < \gamma_d < \alpha_d.$ 

    Assume there is another solution $\gamma'_d \in (0, \alpha_d)$ of the equation $x = f_d(x)$, where $\gamma'_d$ is distinct from $\gamma_d.$ Then $g_d(\gamma'_d)$ also be a solution of $x = f_d(x)$ and since $g_d$ is a decreasing function, we have $\beta_d \neq g_d(\gamma'_d) \in (\alpha_d, 1).$ This contradict to the fact that $\beta_d$ is the unique solution of $x = f_d(x).$ This completes the proof.
\end{proof}

From \cref{lem:f-three-solution} and the intermediate value theorem, it is easy to check the following.

\begin{observation}\label{obs:attractors-position}
    For all sufficiently large $d$, we have
    $\beta_d = 1 - \frac{1 + o_d(1)}{d^{k-2}}$, and $\gamma_d = \frac{1 + o_d(1)}{d + 1}.$ For each $x\in [0, 1]$, the following holds.
    $$
    \begin{cases}
        f_d(x) > x &\text{if } x\in [0, \gamma_d),\\
        f_d(x) < x &\text{if } x\in (\gamma_d, \alpha_d),\\
        f_d(x) > x &\text{if } x\in (\alpha_d, \beta_d),\\
        f_d(x) < x &\text{if } x\in (\beta_d, 1],\\
    \end{cases}
    $$
\end{observation}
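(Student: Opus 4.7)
The plan is to handle the two parts of the observation separately: the sign pattern of $f_d(x)-x$ on $[0,1]$, and the asymptotic formulas for $\beta_d$ and $\gamma_d$.

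For the sign pattern, by \Cref{lem:f-three-solution} the continuous function $f_d(x)-x$ has exactly three zeros in $[0,1]$, namely $\gamma_d<\alpha_d<\beta_d$, so its sign is constant on each of the four intervals $[0,\gamma_d)$, $(\gamma_d,\alpha_d)$, $(\alpha_d,\beta_d)$, $(\beta_d,1]$. Direct evaluation gives $f_d(0)=g_d(1)=1/d>0$ and $f_d(1)=\bigl(1+(d-1)/d^{k-1}\bigr)^{-1}<1$; together with \Cref{clm:num-1}, which already supplies one point in $(\alpha_d,1/2]$ where $f_d(x)>x$ (and hence, by the no-extra-zero property, $f_d>x$ throughout $(\alpha_d,\beta_d)$), these observations fix the four signs to be $+,-,+,-$ as stated.

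For the asymptotics, the key observation is that since $g_d$ is strictly decreasing with unique fixed point $\alpha_d$, the two additional fixed points $\gamma_d$ and $\beta_d$ of $f_d=g_d\circ g_d$ must form a $2$-cycle of $g_d$, so
$$\beta_d=\frac{1}{1+(d-1)\gamma_d^{k-1}},\qquad \gamma_d=\frac{1}{1+(d-1)\beta_d^{k-1}}.$$
I first show $\beta_d\to 1$ by contradiction: if $\beta_d\leq 1-c$ along some subsequence for a fixed $c>0$, then the second identity gives $\gamma_d=O(1/d)$, whence $(d-1)\gamma_d^{k-1}=O(d^{-(k-2)})\to 0$ using $k>2$, but then the first identity forces $\beta_d\to 1$, a contradiction. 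Given $\beta_d\to 1$, the second identity yields $\gamma_d=(1+o_d(1))/d=(1+o_d(1))/(d+1)$; plugging this back into the first identity gives $(d-1)\gamma_d^{k-1}=(1+o_d(1))/d^{k-2}$, and expanding $\bigl(1+(1+o_d(1))/d^{k-2}\bigr)^{-1}$ produces $\beta_d=1-(1+o_d(1))/d^{k-2}$.

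The main (and only) mildly delicate step is the bootstrap argument establishing $\beta_d\to 1$; after that the two asymptotic estimates follow from a single substitution in each direction, and the sign pattern reduces to an intermediate-value argument on top of \Cref{lem:f-three-solution}.
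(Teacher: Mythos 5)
Your overall strategy is the one the paper intends (the paper dismisses this observation as "easy to check" from \Cref{lem:f-three-solution} and the intermediate value theorem, so you are filling in the argument it omits), and your treatment of the asymptotics via the $2$-cycle identities $\beta_d=g_d(\gamma_d)$, $\gamma_d=g_d(\beta_d)$ is correct: these identities do hold, since the paper's proof of \Cref{lem:f-three-solution} defines $\gamma_d\defeq g_d(\beta_d)$ and then $g_d(\gamma_d)=f_d(\beta_d)=\beta_d$. Two remarks on that part. First, your bootstrap as phrased derives $\gamma_d=O(1/d)$ "from the assumption $\beta_d\le 1-c$", but an upper bound on $\beta_d$ gives only a lower bound on $\gamma_d$; what you actually need is the lower bound $\beta_d>\tfrac12$, which is available from the proof of \Cref{lem:f-three-solution} (there $f_d(\tfrac12)>\tfrac12$ forces the unique root in $(\alpha_d,1]$ to exceed $\tfrac12$). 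With that in hand the contradiction is unnecessary: $\beta_d>\tfrac12$ gives $\gamma_d=O(1/d)$, hence $(d-1)\gamma_d^{k-1}=O(d^{-(k-2)})\to 0$ (using $k>2$), hence $\beta_d\to 1$, and the two substitutions you describe then yield both asymptotics.

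There is, however, a genuine gap in the sign-pattern part. You exhibit sample points certifying the sign on three of the four intervals ($x=0$ for $[0,\gamma_d)$, a point of $(\alpha_d,\tfrac12]\subset(\alpha_d,\beta_d)$ via \Cref{clm:num-1}, and $x=1$ for $(\beta_d,1]$), but no witness for the sign on $(\gamma_d,\alpha_d)$: three evaluations cannot "fix four signs", since a continuous function with exactly three zeros may well have a zero at which it does not change sign. The missing interval is easily handled, e.g.\ by conjugating with $g_d$: for $x\in(\gamma_d,\alpha_d)$ one has $g_d(x)\in(\alpha_d,\beta_d)$ because $g_d$ is strictly decreasing with $g_d(\alpha_d)=\alpha_d$ and $g_d(\gamma_d)=\beta_d$, so the already-established inequality $f_d(g_d(x))>g_d(x)$ reads $g_d(f_d(x))>g_d(x)$, which forces $f_d(x)<x$. (Alternatively, evaluate $f_d$ at a concrete point such as $x=2/d\in(\gamma_d,\alpha_d)$, or note $f_d'(\alpha_d)=(k-1)^2(1-\alpha_d)^2>1$.) With that one line added, the proof is complete.
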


The next proposition states that if a strictly increasing function $h$ and a point $p$ with $p = h(p)$ have an interval $I$ containing $p$ which satisfies $h(a) > a$ and $h(b) < b$ for all $a < p < b$ and $a, b \in I$, then the dynamical system $p_{i + 1} = h(p_{i})$ start from a point $p_0 \in I$ eventually converges to $p.$

\begin{proposition}\label{prop:dynamical}
    Let $h$ be a strictly increasing real-valued continuous function that is defined on an interval $I$ and there is a point $p\in I$ such that $p = h(p).$ Assume for all $a, b \in I$ such that $a < p < b$ satisfies $h(a) > a$ and $h(b) < b.$ Let $p_0 \in I$ be an arbitrary point and define $p_{i+1} = h(p_{i})$ inductively for each $i \in \mathbb{Z}_{\geq 0}.$ Then the following holds. $$\lim_{i \to \infty} p_{i} = p.$$
\end{proposition}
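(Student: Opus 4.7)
The plan is to treat this as a standard monotone convergence argument. I would split into three cases based on the position of $p_0$ relative to $p$: the trivial case $p_0 = p$ gives a constant sequence, and the two remaining cases $p_0 < p$ and $p_0 > p$ are symmetric. I will write the argument for $p_0 < p$ in detail; the other case follows by reversing all inequalities.

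Assume $p_0 < p$. First I would prove by induction that $p_0 < p_1 < p_2 < \cdots < p$, i.e.\ the sequence is strictly increasing and bounded above by $p$. For the base case, the hypothesis applied with $a = p_0$ gives $p_1 = h(p_0) > p_0$, while the strict monotonicity of $h$ combined with $h(p) = p$ gives $p_1 = h(p_0) < h(p) = p$. The inductive step is identical: if $p_{i-1} < p_i < p$, then applying the hypothesis with $a = p_i$ (which lies in $I$ since $p_0, p \in I$ and $I$ is an interval, so $[p_0, p] \subseteq I$) yields $p_{i+1} = h(p_i) > p_i$, and strict monotonicity yields $p_{i+1} = h(p_i) < h(p) = p$.

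Next, since $(p_i)$ is monotonically increasing and bounded above by $p$, it converges to some limit $L \in [p_0, p] \subseteq I$. By continuity of $h$, passing to the limit in $p_{i+1} = h(p_i)$ gives $h(L) = L$. If $L < p$, the hypothesis applied with $a = L$ would force $h(L) > L$, contradicting $h(L) = L$. Hence $L = p$, which is the desired conclusion. The case $p_0 > p$ is symmetric: one shows the sequence is strictly decreasing and bounded below by $p$, applies the monotone convergence theorem, and uses continuity together with the hypothesis $h(b) < b$ for $b > p$ to exclude any fixed point strictly greater than $p$.

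There is no real obstacle; the argument is a routine application of the monotone convergence theorem to a one-dimensional dynamical system with a locally attracting fixed point. The only minor point requiring attention is verifying that every iterate $p_i$, and the limit $L$, remain in $I$ so that the hypotheses of the proposition are applicable at each step; this is immediate from the interval property of $I$ combined with the bounds $p_0 \le p_i \le p$ (or $p \le p_i \le p_0$ in the other case).
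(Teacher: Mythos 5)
Your proof is correct and follows essentially the same route as the paper: show the sequence is strictly increasing and bounded above by $p$, invoke monotone convergence, and rule out a limit $L < p$. The only cosmetic difference is that you identify the limit by passing to the limit in $p_{i+1} = h(p_i)$ via continuity, whereas the paper reaches the same contradiction using the inverse function $h^{-1}$; both are valid.
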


\begin{proof}[Proof of \Cref{prop:dynamical}]
    Without loss of generality, assume $p_0 < p.$ Since $h$ is a strictly incrasing function and $h(a) > a$ for all $a < p$ in the interval $I$, the sequence $\{p_i\}_{i\in \mathbb{Z}_{\geq 0}}$ is a strictly increasing function.
    We claim that the sequnce $\{p_{i}\}_{i\in \mathbb{Z}_{\geq 0}}$ is bounded above by $p.$ Assume this is not true. Let $\ell$ be the non-negative integer such that $p_{\ell} > p.$ We note that $p_0 < p$, so $\ell > 1.$ Then $p_{\ell} = h(p_{\ell - 1})$ and $p_{\ell - 1 } < p.$ Since $h$ is strictly increasing, this implies $p_{\ell} = h(p_{\ell - 1}) < h(p) = p$, a contradiction. 

    Thus the sequence $\{p_{i}\}_{i\in \mathbb{Z}_{\geq 0}}$ is bounded above by $p$ and it is strictly increasing function, the limit $\lim_{i \to \infty} p_{i}$ exists and its value is bounded above by $p.$
    
    Assume $p' < p$ be the limit value of the sequence $\{p_{i}\}_{i \in \mathbb{Z}_{\geq 0}}.$ Note that as $h$ is a strictly incrasing function, its inverse function $h^{-1}$ exists. Since $p' < p$, we have $h^{-1}(p') < p'.$ By the definition of the limits of sequences, there exists $\ell$ such that $h^{-1}(p') < p_{\ell}.$ Then we have $p_{\ell + 1} > p'$ since $h$ is a strictly increasing function. Moreover, the sequence $\{p_i\}_{i\in \mathbb{Z}_{\geq 0}}$ is a strictly increasing function, this contradicts the assumption that the limit of the sequence is $p'.$ This implies $\lim_{i\to \infty} p_i = p.$ This completes the proof.
\end{proof}

From \Cref{obs:attractors-position} and \Cref{prop:dynamical}, we directly obtain the following lemma.

\begin{lemma}\label{lem:attractor}
    Let $p_0 \in [0, 1]$ and recursively define $p_{i+1} = f_d(p_i)$ for each $i\in \mathbb{Z}_{\geq 0}.$ Then the following holds.
    $$
    \lim_{i\to \infty} p_i =
    \begin{cases}
        \beta_d &\text{ if } \alpha_d < p_0 \leq 1,\\
        \gamma_d &\text{ if } 0 \leq p_0 < \alpha_d.
    \end{cases}
    $$
\end{lemma}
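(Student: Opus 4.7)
The plan is to reduce the lemma to a direct application of \Cref{prop:dynamical} on each of the two invariant sub-intervals separated by the repelling fixed point $\alpha_d$, using the sign information recorded in \Cref{obs:attractors-position}.

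First I would check that $f_d$ is strictly increasing and continuous on $[0,1]$. This is immediate because $g_d(x) = (1 + (d-1)x^{k-1})^{-1}$ is strictly decreasing on $[0,1]$, so the composition $f_d = g_d \circ g_d$ is strictly increasing. I would also record that $f_d$ maps $[0,1]$ into $(0,1)$, since $g_d([0,1]) \subseteq [1/d, 1]$ and hence $f_d([0,1]) \subseteq (0,1)$.

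Next, suppose $p_0 \in (\alpha_d, 1]$. Because $f_d$ is increasing and $f_d(\alpha_d) = \alpha_d$, we have $p_1 = f_d(p_0) > f_d(\alpha_d) = \alpha_d$, and $p_1 < 1$ by the image bound above; inductively every $p_i$ with $i \geq 1$ lies in $(\alpha_d, 1)$. I would then apply \Cref{prop:dynamical} with $I = (\alpha_d, 1)$, the function $f_d|_I$, and fixed point $p = \beta_d$: by \Cref{obs:attractors-position} we have $f_d(a) > a$ for every $a \in (\alpha_d, \beta_d)$ and $f_d(b) < b$ for every $b \in (\beta_d, 1)$, which are exactly the hypotheses required, yielding $\lim_{i\to\infty} p_i = \beta_d$. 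The case $p_0 \in [0, \alpha_d)$ is symmetric: using $f_d(0) = g_d(1) = 1/d > 0$ together with the monotonicity of $f_d$ and $f_d(\alpha_d) = \alpha_d$, one iteration places $p_1$ inside $(0, \alpha_d)$; then \Cref{prop:dynamical} applied on $I = (0, \alpha_d)$ with $p = \gamma_d$, whose two one-sided sign conditions are again supplied by \Cref{obs:attractors-position}, gives convergence to $\gamma_d$.

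I do not expect any real obstacle: all of the analytic content has already been packaged into \Cref{lem:f-three-solution}, \Cref{obs:attractors-position}, and \Cref{prop:dynamical}. The only care needed is the bookkeeping at the endpoints $p_0 \in \{0,1\}$, which is dispatched by the observation that a single application of $f_d$ pushes them strictly into the open sub-interval on which the proposition is invoked.
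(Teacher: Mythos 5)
Your argument is correct and matches the paper exactly: the paper derives \Cref{lem:attractor} in one line from \Cref{obs:attractors-position} and \Cref{prop:dynamical}, and your write-up simply supplies the routine details (monotonicity of $f_d = g_d\circ g_d$, the image bound, and the one-step reduction to an invariant open interval around each of $\beta_d$ and $\gamma_d$). Nothing further is needed.
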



\subsection{Proof of \Cref{thm:main}}

In this section, we prove the following lemma. \Cref{thm:main} will be deduced from this lemma.

\begin{lemma}\label{lem:counterexample}
    For all sufficiently large $d \equiv 1 \pmod{k},$ there is a $d$-extendable linear $k$-graph $F_0$ such that the following holds.
    \begin{enumerate}
        \item[$\bullet$] $\calP_{S_d^{(2\ell)}(F_0)}\left(\overline{\mathrm{head}(S_d^{(2\ell)}(F_0))}\right) = (1 + o_{\ell}(1))\beta_d,$
        \item[$\bullet$] $\calP_{S_d^{(2\ell - 1)}(F_0)}\left(\overline{\mathrm{head}(S_d^{(2\ell - 1)}(F_0))}\right) = (1 + o_{\ell}(1))\gamma_d.$
    \end{enumerate}
\end{lemma}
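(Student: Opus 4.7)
The plan is to establish the one-step recursion $p_{S_d(F)} = g_d(p_F)$ for $p_F := \calP_F(\overline{\mathrm{head}(F)})$, iterate it, and then choose $F_0$ so that the even iterates land in the basin of attraction of $\beta_d$ under $f_d = g_d \circ g_d$. To derive the recursion, let $v$ be the new head of $S_d(F)$ and write $e_i = \{v, h_{(i,1)}, \ldots, h_{(i,k-1)}\}$ for the $d-1$ edges through it, where $h_{(i,j)} = \mathrm{head}(F_{(i,j)})$. The key structural observation is that $S_d(F) - v$ is the disjoint union $\bigsqcup_{i,j} F_{(i,j)}$, so conditioning on $\{v \notin M\}$ factors the matching distribution over the copies. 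This yields
\[
\frac{\calP_{S_d(F)}(e_i \in M)}{\calP_{S_d(F)}(\overline{v})} = \prod_{j=1}^{k-1} \calP_{F_{(i,j)}}(\overline{h_{(i,j)}}) = p_F^{k-1},
\]
and substituting into the identity $\calP_{S_d(F)}(\overline{v}) + \sum_{i=1}^{d-1} \calP_{S_d(F)}(e_i \in M) = 1$ gives $p_{S_d(F)} = g_d(p_F)$.

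Iterating, $p_\ell := \calP_{S_d^{(\ell)}(F_0)}(\overline{\mathrm{head}(S_d^{(\ell)}(F_0))}) = g_d^{(\ell)}(p_{F_0})$, whence $p_{2\ell} = f_d^{(\ell)}(p_{F_0})$. If $F_0$ can be chosen with $p_{F_0} > \alpha_d$, Lemma~\ref{lem:attractor} gives $p_{2\ell} \to \beta_d$, and continuity of $g_d$ together with $g_d(\beta_d) = \gamma_d$ then forces $p_{2\ell-1} = g_d(p_{2\ell-2}) \to \gamma_d$, yielding both asymptotics in the lemma. To produce such an $F_0$, start from any $d$-extendable linear $k$-graph $F^*$ furnished by Lemma~\ref{lem:base-existence}; since $g_d$ is strictly decreasing with unique fixed point $\alpha_d$, one may take $F_0 := F^*$ whenever $p_{F^*} > \alpha_d$, and $F_0 := S_d(F^*)$ whenever $p_{F^*} < \alpha_d$, in which case $p_{F_0} = g_d(p_{F^*}) > g_d(\alpha_d) = \alpha_d$.

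The main obstacle is ruling out the degenerate equality $p_{F^*} = \alpha_d$. I would handle it by observing that $p_{F^*}$ is rational, being a quotient of two integer matching counts, whereas $\alpha_d$ is a positive root of $(d-1)x^k + x - 1 = 0$ which, by the rational root theorem, is irrational unless $d - 1 = q^{k-1}(q-1)$ for some positive integer $q$. For all $d \equiv 1 \pmod k$ outside this sparse exceptional set the bad case cannot occur. For the remaining sporadic values of $d$ where $\alpha_d \in \mathbb{Q}$, I would instead exhibit two structurally distinct $d$-extendable linear $k$-graphs (for instance, $F^*$ and $F^*$ with a small local perturbation near the head) and verify their values of $p$ differ, so that at most one of them equals $\alpha_d$ and the other serves as the required base.
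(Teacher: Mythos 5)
Your argument follows the paper's proof essentially step for step: the recursion $p_{S_d(F)} = g_d(p_F)$ is exactly the paper's \Cref{clm:iterative} (proved there the same way, by applying \Cref{lem:components-join} to the head of $S_d(F)$, whose removal leaves a disjoint union of copies of $F$); the iteration combined with \Cref{lem:attractor} is identical; and the dichotomy $F_0 = F^*$ versus $F_0 = S_d(F^*)$ according to whether $p_{F^*} > \alpha_d$ or $p_{F^*} < \alpha_d$ is also the paper's. The one divergence is how the degenerate case $p_{F^*} = \alpha_d$ is excluded. The paper's \Cref{clm:irrational} asserts that $\alpha_d$ is always irrational: writing $\alpha_d = n/m$ in lowest terms it derives $(d-1)n^k = m^{k-1}(m-n)$ and claims the right side is not divisible by $n$ — an argument that silently assumes $n > 1$. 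Your rational-root analysis is the correct one: the positive root of $(d-1)x^k + x - 1$ can be rational, namely $\alpha_d = 1/q$ exactly when $d-1 = q^{k-1}(q-1)$, and such $d$ can satisfy $d \equiv 1 \pmod{k}$ (e.g.\ $k=3$, $q=3$, $d=19$). So you have located a genuine (if minor) gap in the paper's own proof rather than introduced a new one. That said, your patch for these exceptional $d$ — perturb $F^*$ near the head and ``verify'' that the two values of $p$ differ — is only a sketch: $p$ is a ratio of two matching counts of a large recursively built hypergraph, and showing that a local perturbation actually changes this ratio (rather than both values coincidentally equalling $1/q$) requires a concrete computation you do not supply. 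Modulo that one unfinished case, which the paper does not handle either, your proof is the paper's proof; note also that for the purposes of \Cref{thm:main} one only needs infinitely many admissible $d$, so restricting to $d$ with $d-1 \neq q^{k-1}(q-1)$ would sidestep the issue entirely.
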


We use the following simple observation crucially in the proof \Cref{lem:counterexample}

\begin{observation}\label{obs:independent}
    Let $H$ be a disjoint union of $k$-graphs $H_1, H_2, \dots H_m.$ For each $i\in [m]$, let $v_i \in V(H_i)$ be arbitrary vertices. Then for the uniformly chosen random matching $M$ of $H,$ the events $v_1\notin M, \dots, v_m\notin M$ are mutually independent. Moreover, we have $\calP_H(\overline{v_i}) = \calP_{H_i}(\overline{v_i})$ for each $i\in [m].$
\end{observation}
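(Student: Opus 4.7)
The plan is to reduce the claim to the elementary observation that matchings of a vertex-disjoint union decompose canonically into matchings of the parts. Since $H_1, \dots, H_m$ are pairwise vertex-disjoint, every edge of $H$ lies in $E(H_i)$ for exactly one $i$. Consequently a set $M \subseteq E(H)$ is a matching of $H$ if and only if, writing $M_i \defeq M \cap E(H_i)$, each $M_i$ is a matching of $H_i$: the ``matching'' condition within each $H_i$ is preserved, and no conflict across different $H_i$'s can arise because they share no vertices. This gives a bijection between the set of matchings of $H$ and the Cartesian product of the sets of matchings of the $H_i$'s.

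Next I would push uniform measure across this bijection. A bijection between finite sets carries the uniform distribution to the uniform distribution, and the uniform distribution on a product of finite sets is the product of the uniform distributions on each factor. Hence if $M$ is drawn uniformly from matchings of $H$, then the induced random tuple $(M_1, \dots, M_m)$ consists of mutually independent random matchings, where each $M_i$ is uniform on the set of matchings of $H_i$.

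Finally, I would observe that the event $\{v_i \notin M\}$ is a function of $M_i$ alone: any edge of $H$ that could cover $v_i$ must be incident to $v_i$, hence must lie entirely in $V(H_i)$, hence is an edge of $H_i$. So $\{v_i \notin M\} = \{v_i \notin M_i\}$. Mutual independence of the $M_i$'s therefore implies mutual independence of the events $\{v_i \notin M\}$, and since $M_i$ has the same distribution as a uniformly random matching of $H_i$, the marginal identity $\calP_H(\overline{v_i}) = \calP_{H_i}(\overline{v_i})$ is immediate.

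There is no genuine obstacle here; the entire statement is a bookkeeping consequence of the product structure of matchings in a disjoint union. The only thing worth being careful about is the step where one asserts that a set $M \subseteq E(H)$ is a matching iff each restriction $M \cap E(H_i)$ is, and this is exactly where vertex-disjointness (as opposed to mere edge-disjointness) of the parts is used.
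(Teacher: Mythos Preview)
Your argument is correct and complete. The paper does not supply a proof of this observation at all; it simply states it and moves on, treating it as self-evident from the product decomposition of matchings in a disjoint union --- which is precisely the mechanism you spell out.
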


From the above observation, we can deduce the following lemma.

\begin{lemma}\label{lem:components-join}
    Let $H$ be a linear $k$-graph and $v\in V(H)$ be a vertex such that $H - v$ is a disjoint union of linear $k$-graphs $\{H_{(i, j)}: i\in [m], j\in [k-1]\}.$
    For each $i\in [m]$ and $j\in [k-1]$, let $e_1,\dots, e_d$ be the edges of $H$ containing $v,$ where $e_i = \{v, v_{(i, 1)}, \dots, v_{(i, k-1)}\}$ and $e_i\cap V(H_{(i, j)}) = \{v_{(i, j)}\}.$
    Then we have the following.
    $$\calP_H(\overline{v}) = \frac{1}{1 + \sum_{i\in [m]}\prod_{j\in [k-1]} \calP_{H_{(i, j)}}\left(\overline{v_{(i,j)}}\right)}.$$
\end{lemma}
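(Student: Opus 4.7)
The plan is to combine the standard conditioning identity sketched in the introduction with the independence afforded by the disjoint-union structure of $H - v$.

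First, partition the matchings of $H$ by the state of $v$: either $v$ is uncovered by $M$, or $v$ is covered by exactly one of the edges $e_1,\dots,e_d$. This yields
\[
\calP_H(\overline{v}) + \sum_{i=1}^{d}\calP_H(e_i\in M)=1.
\]
As noted in the introduction, the bijection $M \leftrightarrow M\setminus\{e_i\}$ between matchings of $H$ containing $e_i$ and matchings of $H-V(e_i)$ (which coincide with the matchings of $H$ that avoid every vertex of $e_i$) gives $\calP_H(e_i\in M) = \calP_H(\overline{v},\overline{v_{(i,1)}},\dots,\overline{v_{(i,k-1)}})$.

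Next, I would divide through by $\calP_H(\overline{v})$. The ratio $\calP_H(e_i\in M)/\calP_H(\overline{v})$ equals $|\mathcal{M}(H-V(e_i))|/|\mathcal{M}(H-v)|$, where $\mathcal{M}(\cdot)$ denotes the set of matchings, and this ratio is exactly $\calP_{H-v}(\overline{v_{(i,1)}},\dots,\overline{v_{(i,k-1)}})$: matchings of $H-v$ avoiding those $k-1$ vertices are precisely matchings of $H-V(e_i)$. Hence
\[
\frac{1}{\calP_H(\overline{v})} = 1 + \sum_{i=1}^{d}\calP_{H-v}\bigl(\overline{v_{(i,1)}},\dots,\overline{v_{(i,k-1)}}\bigr).
\]

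Finally, for each fixed $i$, the vertices $v_{(i,1)},\dots,v_{(i,k-1)}$ lie in pairwise distinct components $H_{(i,1)},\dots,H_{(i,k-1)}$ of the disjoint union $H-v$, by the hypothesis of the lemma. Invoking \Cref{obs:independent}, the events $\overline{v_{(i,1)}},\dots,\overline{v_{(i,k-1)}}$ are mutually independent under the uniform matching on $H-v$ and each individual probability factors through the corresponding component, giving
\[
\calP_{H-v}\bigl(\overline{v_{(i,1)}},\dots,\overline{v_{(i,k-1)}}\bigr) = \prod_{j=1}^{k-1}\calP_{H_{(i,j)}}\bigl(\overline{v_{(i,j)}}\bigr).
\]
Substituting into the previous display and solving for $\calP_H(\overline{v})$ yields the claimed formula. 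There is no substantial obstacle: the proof is essentially bookkeeping with uniform matching counts, and the only point requiring care is the double-counting identity $\calP_H(e_i\in M)=\calP_H(\overline{V(e_i)})$, which is already standard.
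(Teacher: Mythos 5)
Your proposal is correct and follows essentially the same route as the paper: the partition of matchings by the state of $v$, division by $\calP_H(\overline{v})$ to convert to probabilities in $H-v$, and the factorization over the disjoint components of $H-v$ via \Cref{obs:independent}. The only cosmetic point is the $d$ versus $m$ indexing of the edges at $v$, a clash already present in the lemma statement itself.
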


\begin{proof}[Proof of \Cref{lem:components-join}]
     We do the same calculation that we presented in \Cref{sec:intro}. Since for each $i\in [m]$, $\calP_{H}(e_i \in M) = \calP_{H}\left(\overline{v}, \overline{v_{(i, 1)}}, \dots, \overline{v_{(i, k-1)}} \right)$, we obtain the following.

     \begin{equation}\label{eq:tree-1}
         \calP_{H}(\overline{v}) + \sum_{i\in [m]} \calP_{H}\left(\overline{v}, \overline{v_{(i, 1)}}, \dots, \overline{v_{(i, k-1)}} \right) = 1.
     \end{equation}
        
    By dividing each side of \eqref{eq:tree-1} by $\calP_{H}(\overline{v})$ and then take inverse, we obtain 

    \begin{equation}\label{eq:tree-2}
        \calP_{H}(\overline{v}) = \frac{1}{1 + \sum_{i\in [m]} \calP_{H}\left(\overline{v_{(i, 1)}}, \dots, \overline{v_{(i, k-1)}} \middle| \overline{v} \right)}.
    \end{equation}

    We note that for each $i\in [m]$, $\calP_{H}\left(\overline{v_{(i, 1)}}, \dots, \overline{v_{(i, k-1)}} \middle| \overline{v} \right)$ is same as $\calP_{H - v}\left(\overline{v_{(i, 1)}}, \dots, \overline{v_{(i, k-1)}}\right).$ Since $H - v$ is a disjoint union of $H_{(i, j)}$, by \Cref{obs:independent}, the following holds for each $i\in [m].$

    \begin{equation}\label{eq:tree-3}
        \calP_{H - v}\left(\overline{v_{(i, 1)}}, \dots, \overline{v_{(i, k-1)}}\right) = \prod_{j\in [k-1]}\calP_{H_{(i, j)}}\left( \overline{v_{(i, j)}} \right).
    \end{equation}

    By combining \eqref{eq:tree-2} and \eqref{eq:tree-3}, we obtain the desired formula. This completes the proof.
\end{proof}

We are now ready to prove \Cref{lem:counterexample}.

\begin{proof}[Proof of \Cref{lem:counterexample}]
    Let $F$ be a $d$-extendable linear $k$-graph obtained by \Cref{lem:base-existence}.

    \begin{claim}\label{clm:iterative}
        Let $F$ be a $d$-extendable linear $k$-graph. Then the following holds.
        $$\calP_{S_d(F)}\left(\overline{\mathrm{head}(S_d(F))}\right) = g_d\left(\calP_{F}\left(\overline{\mathrm{head}(F)}\right)\right).$$
    \end{claim}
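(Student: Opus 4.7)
The plan is to apply \Cref{lem:components-join} directly with $H = S_d(F)$ and $v = \mathrm{head}(S_d(F))$. First I would verify that the structure of $S_d(F)$ exactly matches the hypothesis of \Cref{lem:components-join}: by the definition of the operation, $v$ is a newly introduced vertex with $d-1$ incident edges $e_i = \{v, \mathrm{head}(F_{(i,1)}), \dots, \mathrm{head}(F_{(i,k-1)})\}$ for $i \in [d-1]$, and removing $v$ yields the disjoint union of the $(d-1)(k-1)$ copies $F_{(i,j)}$. Thus the hypothesis of \Cref{lem:components-join} is satisfied with $m = d-1$ and $v_{(i,j)} = \mathrm{head}(F_{(i,j)})$.

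Next I would invoke \Cref{lem:components-join} to obtain
\[
\calP_{S_d(F)}(\overline{v}) = \frac{1}{1 + \sum_{i \in [d-1]} \prod_{j \in [k-1]} \calP_{F_{(i,j)}}\bigl(\overline{\mathrm{head}(F_{(i,j)})}\bigr)}.
\]
Because each $F_{(i,j)}$ is a disjoint copy of $F$, the value $\calP_{F_{(i,j)}}(\overline{\mathrm{head}(F_{(i,j)})})$ equals $\calP_F(\overline{\mathrm{head}(F)})$, independent of $i$ and $j$. Writing $x \defeq \calP_F(\overline{\mathrm{head}(F)})$, the inner product becomes $x^{k-1}$ and the outer sum contributes $(d-1)x^{k-1}$, so
\[
\calP_{S_d(F)}(\overline{v}) = \frac{1}{1 + (d-1)x^{k-1}} = g_d(x),
\]
which is exactly the desired formula.

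There is no real obstacle here; the content of the claim is essentially a repackaging of \Cref{lem:components-join} for the specific graph $S_d(F)$. The only thing to be careful about is the bookkeeping: confirming that the new head $v$ has degree $d-1$ (so that the sum in the denominator ranges over $[d-1]$), and that each edge $e_i$ meets exactly one head from each of the $k-1$ components $F_{(i,1)}, \dots, F_{(i,k-1)}$, which is immediate from the construction.
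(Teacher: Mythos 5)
Your proposal is correct and follows exactly the same route as the paper: the paper's proof likewise observes that removing $\mathrm{head}(S_d(F))$ leaves a disjoint union of copies of $F$, applies \Cref{lem:components-join} with the head's degree $d-1$ playing the role of $m$, and reads off $g_d\left(\calP_F\left(\overline{\mathrm{head}(F)}\right)\right)$. Your version just spells out the bookkeeping slightly more explicitly.
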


    \begin{claimproof}[Proof of \Cref{clm:iterative}]
       If we remove the head of $S_d(F)$ from $S_d(F)$, then the remaining hypergraph is a disjoint union of copies of $F$, and by the definition of the operation $S_d(\cdot)$, we can apply \Cref{lem:components-join} to $S_d(F)$ and the head of $S_d(F).$ Since the head of $S_d(F)$ has degree $d-1$, by \Cref{lem:components-join}, the desired equality holds. This proves the claim.
    \end{claimproof}

    The next claim states that the probability that a random matching does not contain the head of $F$ cannot be the value $\alpha_d$, which is one of the fixed points of the function $f_d$ that is highly unstable.
    
    \begin{claim}\label{clm:irrational}
        $\calP_F\left(\overline{\mathrm{head}(F)}\right) \neq \alpha_d.$
    \end{claim}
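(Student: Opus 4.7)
My plan is to contrast the arithmetic nature of $\calP_F(\overline{\mathrm{head}(F)})$ and $\alpha_d$. First, I would observe that $\calP_F(\overline{\mathrm{head}(F)})$ is rational, being the quotient of two positive integers (the number of matchings of $F$ avoiding $\mathrm{head}(F)$, and the total number of matchings of $F$). Thus it suffices to prove that $\alpha_d$ is irrational for the $d$'s we care about.

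To show $\alpha_d$ is irrational, I would rewrite $\alpha_d = g_d(\alpha_d)$ as $p_d(\alpha_d) = 0$, where $p_d(x) \defeq (d-1)x^k + x - 1$. Note $\alpha_d$ is the unique root of $p_d$ in $(0,1)$. By the rational root theorem, any rational root of $p_d$ in lowest terms $a/b$ satisfies $a \mid 1$ and $b \mid (d-1)$, so the only candidates in $(0,1)$ are $1/q$ with $q \geq 2$ an integer divisor of $d-1$. Substituting $x = 1/q$ into $p_d(x) = 0$ shows such a root exists precisely when $d - 1 = q^{k-1}(q-1)$ for some integer $q \geq 2$. Whenever $d-1$ is not of this form, $p_d$ has no rational root in $(0,1)$, hence $\alpha_d$ is irrational, so $\calP_F(\overline{\mathrm{head}(F)}) \neq \alpha_d$.

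The main obstacle is that this Diophantine condition can occur even for $d \equiv 1 \pmod{k}$: for example, taking $k = 3$ and $q = 3$ gives $d - 1 = 18$, so $\alpha_{19} = 1/3$ is rational. However, the exceptional set $B_k \defeq \{q^{k-1}(q-1) : q \geq 2\}$ has counting function $|B_k \cap [1, N]| = O(N^{1/k})$, which is negligible compared to the arithmetic progression $\{d : d \equiv 1 \pmod{k}\}$. Hence infinitely many $d \equiv 1 \pmod{k}$ satisfy $d - 1 \notin B_k$, giving irrational $\alpha_d$. Since \Cref{thm:main} only requires the existence of a counterexample for infinitely many $d$, it is enough to restrict the proof of \Cref{lem:counterexample} tacitly to such $d$, which disposes of the claim. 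The substantive content is essentially a one-line application of the rational root theorem; the only subtlety is this bookkeeping about which $d$ to admit.
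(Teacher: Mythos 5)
Your overall strategy coincides with the paper's: $\calP_F\left(\overline{\mathrm{head}(F)}\right)$ is a ratio of two matching counts, hence rational, so one wants $\alpha_d$ to be irrational. But you have gone further and, via the rational root theorem applied to $(d-1)x^k + x - 1$, determined exactly when irrationality fails: $\alpha_d = 1/q$ is rational precisely when $d - 1 = q^{k-1}(q-1)$ for an integer $q \geq 2$. This exposes a genuine gap in the paper's own argument. The paper writes $\alpha_d = n/m$ in lowest terms, derives $(d-1)n^k = m^{k-1}(m-n)$, and concludes that the right-hand side is not divisible by $n$; that step is vacuous when $n = 1$, and your example $k = 3$, $q = 3$, $d = 19 \equiv 1 \pmod{3}$ gives $\alpha_{19} = 1/3$, so the blanket assertion that $\alpha_d$ is irrational is false and the claim itself is not established for such $d$. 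Your repair --- discarding the exceptional set $B_k = \{q^{k-1}(q-1) : q \geq 2\}$, which has only $O(N^{1/k})$ elements up to $N$ and therefore leaves infinitely many admissible $d \equiv 1 \pmod{k}$ --- is sound, since \Cref{thm:main} only needs arbitrarily large $d$; the cost is that \Cref{lem:counterexample} must then be stated for this restricted set of $d$ rather than for all sufficiently large $d \equiv 1 \pmod{k}$. (For the exceptional $d$ one would instead need a separate argument that the specific $F$ constructed never achieves $\calP_F\left(\overline{\mathrm{head}(F)}\right) = 1/q$ exactly; neither you nor the paper provides one, and none is needed for the main theorem.) In short: your proof is correct, it follows the paper's route, and it fixes an error in the paper's argument.
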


    \begin{claimproof}[Proof of \Cref{clm:irrational}]
        The number $\calP_F\left(\overline{\mathrm{head}(F)}\right)$ can be computed by dividing the number of matchings in $F - v$ by the number of matchings in $F$, so the number $\calP_F\left(\overline{\mathrm{head}(F)}\right)$ is a rational number. Hence, to finish the proof, it suffices to show that $\alpha_d$ is irrational. Assume $\alpha_d$ is a rational number and it is $\frac{n}{m}$ for some integer $n, m$ which are coprime to each other. Then we have $\frac{n}{m} + \frac{(d - 1)n^k}{m^k} = 1.$ Thus $(d - 1)n^k = m^{k-1}(m - n).$ Note that the left-hand side is divisible by $n$ while the right-hand side is not divisible by $n$ because $n$ and $m$ are coprime, a contradiction. This proves the claim.
    \end{claimproof}

    By \Cref{clm:irrational}, we know that $\calP_{F}\left(\overline{\mathrm{head}(F)}\right) \neq \alpha_d.$ If $\calP_{F}\left(\overline{\mathrm{head}(F)}\right) > \alpha_d$, then we set $F_0 = F.$ Otherwise, we set $F_0 = S_d(F).$ Then by \Cref{clm:iterative}, we have the following.
    $$\alpha_d < \calP_{F_0}\left(\overline{\mathrm{head}(F_0)}\right) \leq 1.$$

    Let $p_{i} = \calP_{S_d^{(i)}(F_0)}\left(\overline{\mathrm{head}(S_d^{(i)})(F_0)}\right)$ for each $i\in \mathbb{Z}_{\geq 0}.$ Then by \Cref{clm:iterative}, we have $p_{i+1} = g_d(p_i)$ for each $i\in \mathbb{Z}_{\geq 0}$ and $\alpha_d < p_0 \leq 1.$ Since $f_d(x) = g_d(g_d(x))$, for each $i\in \mathbb{Z}_{\geq 0}$, we have $p_{i+2} = f_d(p_i).$ Then by \Cref{lem:attractor}, the sequence $\{p_{2i}\}_{i\in \mathbb{Z}_{\geq 0}}$ converges to $\beta_d$ as $i$ goes to infinity. Similarly, since $0 < p_1 = g_d(p_0) < \alpha_d$ holds, the sequence $\{p_{2i-1}\}_{i\in \mathbb{Z}_{\geq 0}}$ converges to $\gamma_d$ as $i$ goes to infinity. This completes the proof.
\end{proof}

We have collected all ingredients to prove \Cref{thm:main}

\begin{proof}[Proof of \Cref{thm:main}]
    Let $d \equiv 1 \pmod{k}$ be a sufficiently large integer and $\delta$ be a sufficiently smaller positive real number than $\ve.$
    By \Cref{lem:counterexample}, there is a $d$-extendable linear $k$-graph $F_0$ and $\ell$ such that 
    \begin{equation}\label{eq:odd-level}
        \calP_{S_d^{(2\ell + 1)}(F_0)}\left(\overline{\mathrm{head}(S_d^{(2\ell+1)}(F_0))} \right) < \frac{1 + \delta}{d}
    \end{equation}
    and
    \begin{equation}\label{eq:even-level}
        \calP_{S_d^{(2\ell)}(F_0)}\left(\overline{\mathrm{head}(S_d^{(2\ell+1)}(F_0))} \right) > 1 - \frac{1 + \delta}{d^{k-2}}.
    \end{equation}
    
    Let $H_0$ be a hypergraph $S_d^{(2\ell + 1)}(F_0).$ Now we consider $H' = \bigcup_{i\in [d], j\in [k-1]} H_{(i, j)}$ which is a disjoint union of $(k-1)d$ copies of $H_0.$ For each $i\in [d]$ and $j\in [k-1]$, let $v_{(i, j)}$ be the head of $H_{(i, j)}.$ We now introduce a new vertex $v$ that is disjoint from $V(H').$ Define a $k$-graph $H$ as $V(H) = V(H') \cup \{v\}$ and $E(H) = E(H') \cup \bigcup_{i\in [d-1]} \{v, v_{(i, 1)}, \dots, v_{(i, k-1)}\}.$ Since $H_0$ is $d$-extendable linear $k$-graph, we observe that $H$ is a $d$-regular linear $k$-graph.

    We now claim that $H$, $v$, $v_{(1, 1)}$ would be the hypergraph and vertices that satisfy the conclusion of \Cref{thm:main}. Then \Cref{lem:components-join} and \eqref{eq:odd-level} implies 
    $$\calP_H(\overline{v}) > \frac{1}{1 + d \left(\frac{1 + \delta}{d}\right)^{k-1}} > 1 - \frac{1 - \ve}{d^{k-2}}.$$

    Similarly, \Cref{lem:components-join} and \eqref{eq:even-level} implies 
    $$\calP_H\left(\overline{v_{(1, 1)}}\right) < \frac{1}{1 + (d - 1)\left(1 - \frac{1 - \delta}{d^{k-2}}\right)^{k-1} + 1 } < \frac{1 + \ve}{d + 1}.$$ This completes the proof.
\end{proof}


\section{Conflict-free walks and matching polynomials}\label{sec:define-tree}

In this section, we generalize Godsil's result~\cite{godsil} into hypergraphs. We start with \emph{matching polynomials} for hypergraphs.
The matching polynomial of an $n$-vertex $k$-graph $H$ is given by $$m_k(H, x) = \sum_{i = 0}^{\lfloor n/k \rfloor} (-1)^i p(H, i)x^{n-ki},$$ where $p(H, i)$ is the number of distinct matchings of size $i$ in $H$ where we let $p(H, 0) = 1.$

A similar notion of the matching polynomial of $H$ is the \emph{matching-generating polynomial} of $H$, which is given by $$q_k(H, x) = \sum_{i = 0}^{\lfloor n/k \rfloor} p(H, i)x^i.$$ 
We note that the following identity between the matching polynomials and matching-generating polynomials holds.
\begin{equation}\label{eq:identity}
    m_k(H, x) = x^n q_k(H, -x^k).
\end{equation}

The study of matching polynomials for hypergraphs was recently initiated, to get more information on it, we recommend to see~\cite{clark-cooper,su-kang-li-shan,wei-shan}.

As the purpose of this section is to extend the result of Godsil~\cite{godsil} into hypergraphs, we recall Godsil's notion of \emph{path-tree}. Let $G$ be a graph and $v$ be a vertex of $G.$ The path-tree $T = T(G, v)$ of $G$ rooted at $v$ is a tree such that the vertex set of $T$ is the paths in $G$ which start from $v$ and we join two distinct paths in $T(G, v)$ whenever one is the maximal sub-path of the another. The main theorem of \cite{godsil} is the following.

\begin{theorem}[Godsil~\cite{godsil}]\label{thm:godsil}
    Let $G$ be a graph and $v \in V(G).$ Then the following holds.
    $$\frac{m_2(G - v, x)}{m_2(G, x)} = \frac{m_2(T(G, v) - V, x)}{m_2(T(G, v), x)},$$
    where $V$ is an one-vertex path $(v)$ of $G.$
\end{theorem}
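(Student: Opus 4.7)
The plan is to prove Theorem 3.1 by induction on $|V(G)|$, using the standard vertex-deletion recursion for the matching polynomial: for any $v\in V(G)$,
\[
m_2(G, x) = x\cdot m_2(G-v, x) - \sum_{u\sim v} m_2(G-v-u, x),
\]
which follows by splitting matchings of $G$ according to whether they cover $v$ and, if so, through which edge at $v$. Dividing by $m_2(G-v,x)$ gives
\[
\frac{m_2(G,x)}{m_2(G-v,x)} = x - \sum_{u\sim v} \frac{m_2(G-v-u,x)}{m_2(G-v,x)}.
\]
The theorem thus reduces to showing that the analogous manipulation for $T(G,v)$ at its root $V=(v)$ produces exactly the same expression.

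The structural input is the following description of $T(G,v)-V$. By the definition of the path-tree, the vertices of $T(G,v)$ adjacent to $V$ are precisely the length-$1$ paths $(v,u)$ with $u\sim_G v$, so removing $V$ decomposes $T(G,v)$ into disjoint subtrees $T_u$, one per neighbour $u$ of $v$, where $T_u$ consists of all paths in $G$ starting with the prefix $(v,u)$. Stripping the leading $v$ gives a bijection between such paths and paths in $G-v$ starting at $u$, and this bijection is an isomorphism $T_u\cong T(G-v,u)$ that identifies the root $(v,u)$ of $T_u$ with the one-vertex root $(u)$ of $T(G-v,u)$, since the ``maximal sub-path'' adjacency depends only on the relative structure of the paths.

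Now apply the vertex-deletion recursion to $T(G,v)$ at $V$. Using the multiplicativity of $m_2$ on disjoint unions,
\[
m_2(T(G,v)-V, x) = \prod_{u\sim v} m_2(T(G-v,u), x),
\]
and removing a further neighbour $(v,u)$ only alters the $u$-th factor. Hence the $w\neq u$ factors cancel in the ratio, leaving
\[
\frac{m_2(T(G,v) - V - (v,u),x)}{m_2(T(G,v)-V,x)} = \frac{m_2(T(G-v,u) - (u),x)}{m_2(T(G-v,u),x)}.
\]
By the inductive hypothesis applied to $G-v$ with vertex $u$, this last ratio equals $m_2(G-v-u,x)/m_2(G-v,x)$. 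Summing over $u\sim v$, the recursion applied to $T(G,v)$ collapses into the recursion applied to $G$; taking reciprocals yields the theorem. The base case where $v$ is isolated is immediate, since then $T(G,v)$ is just $V$ and $m_2(G,x)=x\cdot m_2(G-v,x)$.

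The step I expect to be the most delicate is the structural one, namely verifying both the isomorphism $T_u\cong T(G-v,u)$ and the correct identification of roots. This is essentially a definition chase, but sloppiness about which vertex of $T_u$ plays the role of the root would break the cancellation in the ratio; after that, everything is formal manipulation of a polynomial identity. (For the extension to hypergraphs later in Section 3, one should expect the analogous hypergraph vertex-deletion recursion and a suitable hyperpath-tree to play the same roles, with ``conflict-free'' accounting for the fact that edges of a $k$-graph carry more than one other vertex.)
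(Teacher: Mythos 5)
Your proof is correct, and it is precisely the $k=2$ specialization of the argument the paper uses for its hypergraph generalization (Theorem~\ref{thm:walk-tree}): the vertex-deletion recursion you state is the $k=2$ case of \Cref{lem:su-kang-li-shan}, your decomposition of $T(G,v)-V$ into rooted copies of $T(G-v,u)$ is \Cref{obs:structure-walk-tree}~$(iii)$ together with \Cref{lem:disconnected}, and the induction closes in the same way. The paper itself only cites Godsil for \Cref{thm:godsil} rather than reproving it, but your argument is the standard one and matches the template the paper follows.
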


By combining \Cref{thm:godsil} and \eqref{eq:identity}, we obtain the following equality. 
\begin{equation}\label{eq:convert-to-tree}
    \calP_G(\overline{v}) = \calP_{T(G, v)}(\overline{V}).
\end{equation}
This implies that we can convert the host graph as a path-tree, so we can recursively compute the value of $\calP_G(\overline{v}).$

Our goal for this section is to construct a hypergraph analog of a path-tree and establish a similar identity to \Cref{thm:godsil} for hypergraphs. Let $H$ be a hypergraph and $v_1, \dots, v_n \in V(H)$ be distinct vertices and $e_1, \dots, e_{n-1} \in E(H)$ be distinct edges of $H.$ We say $W = (v_1, e_1, v_2, \dots, v_{n-1}, e_{n-1}, v_n)$ is a \emph{Berge-path} starts from $v_1$ and ends at $v_n$ if $v_i, v_{i+1} \in e_i$ for each $i\in [n-1].$ This notion was introduced by Berge~\cite{berge1973graphs}.
A $k$-graph $T$ is a \emph{$k$-uniform hypertree} if, for every pair of vertices $u$ and $v$, there is a unique Berge-path that starts from $u$ and ends at $v.$ Definitions of rooted $k$-uniform hypertree and its rooted sub-hypertree are the obvious hypergraph versions of rooted trees and subtrees in graph theory. 

Berge-paths usually play the role of a hypergraph version of paths in several topics related to hypergraphs. However, to extend the notion of the path into an appropriate hypergraph analog that matches \Cref{thm:godsil}, we need a new type of Berge-path, which we call \emph{conflict-free walk}.

\begin{definition}\label{def:conflict-free}
    Let $H$ be a $k$-graph and $\prec$ be an arbitrary linear ordering of $V(H).$ Let $\ell \geq 1$ be an integer, $v_0, \dots, v_{\ell}\in V(H)$ and ${v_{i-1}, v_i}\subset e_i \in E(H)$ for each $i\in [\ell].$
    Let $e_i = \{v_{i-1}, u_{(i, 1)}, \dots, u_{(i, k-2)}, v_i\}$ and $C_i = \{u_{(i, j)}: u_{(i, j)} \prec v_i, j\in [k-2]\} \cup \{v_{i-1}\}$ for each $i\in [\ell].$
    We say a Berge-path $W = (v_0, e_1, v_1, e_2, v_3, \dots, v_{\ell - 1}, e_{\ell}, v_{\ell})$ is a \emph{conflict-free walk} starts from $v_0$ and ends at $v_{\ell}$ if for each $2\leq i\leq \ell$, the edge $e_i$ is disjoint with the set $\bigcup_{j=1}^{i-1} C_j.$ We also consider an one-vertex Berge-path $V = (v)$ as a conflict-free walk starts from $v$ and ends at $v.$
\end{definition}

We now define \emph{$k$-walk-tree}, which is a hypergraph analog of the path-tree.

\begin{definition}\label{def:walk-tree}
    Let $H$ be a $k$-graph, $v\in V(H),$ and we fix a linear ordering $\prec$ of $V(H).$ We define a $k$-walk-tree $T(H, v)$ of $H$ rooted at $v$ as follows. The vertex set of $T(H, v)$ is the set of conflict-free walks that start from $v.$ For the edge set, we join $k$ conflict-free walks $W_0, W_1, \dots, W_{k-1}$ as a hyperedge of $T(H,v)$ if there is an edge $e = \{u_0, \dots, u_{k-1}\} \in E(H)$ such that $W_0$ is a conflict-free walk starts from $v$ and ends at $u_0$ and for each $i\in [k-1]$, the conflict-free walk $W_i$ ends at $u_i$ and it is obtained from $W_0$ by adding an edge $e.$
\end{definition}

The following observation directly follows from \Cref{def:conflict-free}.

\begin{observation}\label{obs:structure-walk-tree}
    Let $H$ be a $k$-graph, $v\in V(H),$ and we fix a linear ordering $\prec$ of $V(H).$ Let $e_1, \dots, e_m \in E(H)$ be the set of edges that contain $v$ such that $e_i = \{v, u_{(i, 1)}, \dots, u_{(i, k-1)}\}$ and $u_{(i, 1)} \prec \cdots \prec u_{(i, k-1)}$ for each $i\in [m].$ Then the following holds.
    \begin{enumerate}
        \item[$(i)$] $T(H, v)$ is a $k$-uniform hypertree,
        \item[$(ii)$] if $H$ is a hypertree, then $T(H, v)$ is isomorphic to $H$, where $V = (v)$ in $T(H, v)$ is mapped to $v$ in $H$,
        \item[$(iii)$] $T(H, v)$ can be obtained from the collection of disjoint union of hypertrees $$\Bigl\{\bigcup_{j = 1}^{k-1} T(H - \{v, u_{(i, 1)}, \dots, u_{(i, j-1)}\}, u_{(i, j)}): i\in [m]\Bigr\}$$ by adding a hyperedge $\{v, (u_{(i, 1)}), \dots, (u_{(i, k-1)})\}$ for each $i\in [m]$, where $(u_{(i, j)})$ is a root of $T(H - \{v, u_{(i, 1)}, \dots, u_{(i, j-1)}\}, u_{(i, j)})$ for each $i\in [m]$ and $j\in [k-1].$
    \end{enumerate}
\end{observation}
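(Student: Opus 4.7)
The plan is to establish (iii) first by unwinding \Cref{def:conflict-free} at the root, and then to deduce (i) and (ii) by induction on $|V(H)|$ using (iii).

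For (iii), I would partition the vertices of $T(H, v)$ into the root $V = (v)$ and, for each pair $(i, j)$ with $i \in [m]$ and $j \in [k-1]$, the conflict-free walks whose first step is $(v, e_i, u_{(i, j)})$. For such a walk $W$, \Cref{def:conflict-free} gives first conflict set $C_1 = \{v\} \cup \{u_{(i, \ell)} : u_{(i, \ell)} \prec u_{(i, j)},\ \ell \neq j\}$, which under the labelling $u_{(i, 1)} \prec \cdots \prec u_{(i, k-1)}$ equals $\{v, u_{(i, 1)}, \ldots, u_{(i, j-1)}\}$. The remaining edges of $W$ must avoid $C_1$ and also be conflict-free among themselves, so deleting the first step of $W$ yields exactly a conflict-free walk from $u_{(i, j)}$ in the sub-hypergraph $H - \{v, u_{(i, 1)}, \ldots, u_{(i, j-1)}\}$ with the restriction of $\prec$. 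This bijection on vertices extends to the hyperedges of $T(H, v)$ not touching $V$, because the grouping condition of \Cref{def:walk-tree} (sharing a common prefix and differing in the last edge) is preserved under taking suffixes. Finally, the hyperedges of $T(H, v)$ incident to $V$ are precisely the $m$ hyperedges $\{V, (v, e_i, u_{(i, 1)}), \ldots, (v, e_i, u_{(i, k-1)})\}$ by \Cref{def:walk-tree}, which matches the hyperedges added in (iii).

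For (i), I induct on $|V(H)|$; the base case of a single vertex is immediate. For the inductive step, (iii) realises $T(H, v)$ as a disjoint union of $k$-uniform hypertrees (by the inductive hypothesis, applied to each strictly smaller $H - \{v, u_{(i,1)}, \ldots, u_{(i,j-1)}\}$) together with $V$ and $m$ new hyperedges, each consisting of $V$ and the $k-1$ roots of the subtrees grouped under one edge $e_i$. These $m(k-1)$ roots are pairwise distinct, and each new hyperedge meets $k-1$ pairwise different subtrees in exactly one root apiece, so the result is $k$-uniform, connected, and Berge-acyclic; the uniqueness of a Berge-path between two given vertices follows either from the inductive hypothesis (when both lie in a common subtree) or is pinned down by the entry and exit subtrees of the path at $V$.

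For (ii), the same induction applies. If $H$ is itself a hypertree then no two vertices of a single edge can lie in the same component of $H - v$, so the component of $u_{(i, j)}$ in $H - \{v, u_{(i, 1)}, \ldots, u_{(i, j-1)}\}$ equals its component in $H - v$ and is itself a hypertree. The inductive hypothesis identifies the corresponding $k$-walk-tree with that component, so the reassembly in (iii) reconstructs the edges $e_i$ of $H$ and yields $T(H, v) \cong H$. The main point to keep in mind throughout is that the orderings on the sub-hypergraphs in (iii) are the restrictions of $\prec$, which is what guarantees that the suffix bijection in the proof of (iii) preserves the conflict-free condition; I expect the rest of the argument to be essentially bookkeeping.
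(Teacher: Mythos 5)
Your argument is correct. The paper states this as an observation that ``directly follows'' from \Cref{def:conflict-free} and gives no proof, and your verification --- establishing (iii) via the suffix bijection (noting that the first conflict set of a walk beginning $(v, e_i, u_{(i,j)})$ is exactly $\{v, u_{(i,1)}, \dots, u_{(i,j-1)}\}$, so that deleting the first step lands precisely in $T(H - \{v, u_{(i,1)}, \dots, u_{(i,j-1)}\}, u_{(i,j)})$) and then deducing (i) and (ii) by induction on $|V(H)|$ --- is exactly the intended unwinding of the definitions.
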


We note that if $k = 2$, then conflict-free walks are paths and $k$-walk-trees are path-trees.

We now claim that the $k$-walk-tree is an appropriate generalization of Godsil's path-tree.

\begin{theorem}\label{thm:walk-tree}
    Let $H$ be a $k$-graph, $v \in V(H),$ and we fix a linear ordering of $V(H).$ Then the following holds.
    $$\frac{m_k(H - v, x)}{m_k(H, x)} = \frac{m_k(T(H, v) - V, x)}{m_k(T(H, v), x)},$$ where $V$ is an one-vertex conflict-free walk $(v).$
\end{theorem}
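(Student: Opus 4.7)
The plan is to induct on $|V(H)|$, using a vertex-deletion recursion for the $k$-uniform matching polynomial on both sides and identifying terms through the structural decomposition of $T(H,v)$ provided in \Cref{obs:structure-walk-tree}.

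The first ingredient is the recursion
\begin{equation}\label{eq:recursion-planned}
    m_k(H, x) = x \cdot m_k(H - v, x) - \sum_{e \in E(H),\, v \in e} m_k(H - e, x),
\end{equation}
where $H - e$ denotes the deletion of all $k$ vertices of $e$. This follows by classifying size-$i$ matchings of $H$ according to whether they cover $v$: those that do not are matchings of $H - v$, and those that do are, for each $e \ni v$, in bijection with size-$(i-1)$ matchings of $H - e$. Substituting into the definition of $m_k$ and using $|V(H - v)| = |V(H)| - 1$ and $|V(H - e)| = |V(H)| - k$ yields \eqref{eq:recursion-planned}. Dividing by $m_k(H - v, x)$, which has leading term $x^{|V(H)|-1}$ and is thus a nonzero polynomial, produces
\begin{equation}\label{eq:ratio-planned}
    \frac{m_k(H, x)}{m_k(H - v, x)} = x - \sum_{e \ni v} \frac{m_k(H - e, x)}{m_k(H - v, x)}.
\end{equation}

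I then induct on $|V(H)|$. The base case $|V(H)| = 1$ is immediate: both ratios equal $1/x$. For the inductive step, apply \eqref{eq:ratio-planned} to $T(H,v)$ at the root $V$. By part $(iii)$ of \Cref{obs:structure-walk-tree}, the edges of $T(H,v)$ containing $V$ are exactly $\{V, (u_{(i,1)}), \ldots, (u_{(i,k-1)})\}$ for $i \in [m]$, and $T(H,v) - V$ is the disjoint union of the hypertrees $T_{(i,j)} \defeq T(H - \{v, u_{(i,1)}, \ldots, u_{(i,j-1)}\}, u_{(i,j)})$ over $i \in [m]$, $j \in [k-1]$. Deleting the $k-1$ vertices $(u_{(i,1)}), \ldots, (u_{(i,k-1)})$ for a fixed $i$ just removes the root of each $T_{(i,j)}$ in that group. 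Using multiplicativity of $m_k$ over disjoint unions and the induction hypothesis applied to each smaller pair $(H - \{v, u_{(i,1)}, \ldots, u_{(i,j-1)}\}, u_{(i,j)})$, the $i$-th summand in \eqref{eq:ratio-planned} written for $T(H,v)$ becomes
$$\prod_{j=1}^{k-1} \frac{m_k(T_{(i,j)} - (u_{(i,j)}), x)}{m_k(T_{(i,j)}, x)} = \prod_{j=1}^{k-1} \frac{m_k(H - \{v, u_{(i,1)}, \ldots, u_{(i,j)}\}, x)}{m_k(H - \{v, u_{(i,1)}, \ldots, u_{(i,j-1)}\}, x)} = \frac{m_k(H - e_i, x)}{m_k(H - v, x)},$$
where the last equality is telescoping. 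Comparing with \eqref{eq:ratio-planned} applied to $(H, v)$ delivers the identity.

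The main obstacle is the bookkeeping around the linear ordering needed to apply the induction hypothesis: we must confirm that $T_{(i,j)}$ really is the $k$-walk-tree of $H - \{v, u_{(i,1)}, \ldots, u_{(i,j-1)}\}$ rooted at $u_{(i,j)}$ under the inherited ordering. This amounts to checking that a conflict-free walk in $H$ starting at $v$ with first edge $e_i$ and second vertex $u_{(i,j)}$ corresponds bijectively to a conflict-free walk in the smaller hypergraph starting at $u_{(i,j)}$. This is where \Cref{def:conflict-free}'s design pays off: the conflict set generated by the initial edge $e_i$ is exactly $\{v, u_{(i,1)}, \ldots, u_{(i,j-1)}\}$ (since these are the vertices of $e_i$ preceding $u_{(i,j)}$ in the ordering, together with $v$), so forbidding later edges to hit this set is identical to removing these vertices from $H$; the remaining conflict sets along the walk are unaffected by the restriction. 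This is essentially the content of \Cref{obs:structure-walk-tree}$(iii)$, and I would verify it in detail as a preliminary step before running the induction.
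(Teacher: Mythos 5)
Your proof is correct and follows essentially the same route as the paper's: the paper also inducts on $|V(H)|$, uses the vertex-deletion recursion (quoted there as a lemma of Su, Kang, Li, and Shan rather than re-derived), writes each summand as the same telescoping product, and matches terms via \Cref{obs:structure-walk-tree}$(iii)$ together with multiplicativity of $m_k$ over disjoint unions. The only cosmetic difference is that you expand the recursion at the root of $T(H,v)$ and work back to $H$, whereas the paper runs the identical chain of equalities in the other direction.
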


To prove \Cref{thm:walk-tree}, we collect the following two lemmas. 

\begin{lemma}\label{lem:disconnected}
    Let $H$ be a $k$-graph which is a disjoint union of two vertex-disjoint $k$-graphs $H_1$ and $H_2.$ Then the following holds.
    $$m_k(H, x) = m_k(H_1, x)m_k(H_2, x).$$
\end{lemma}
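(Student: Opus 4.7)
The plan is to prove the identity by a direct convolution argument on the matching-count coefficients. Since $H_1$ and $H_2$ share no vertex, every matching of $H$ is the disjoint union of a matching of $H_1$ and a matching of $H_2$, and conversely any such pair of matchings produces a matching of $H$. First I would record the resulting bijective identity
\begin{equation*}
    p(H, i) \;=\; \sum_{j = 0}^{i} p(H_1, j)\, p(H_2, i-j)
\end{equation*}
for every $i \geq 0$, where $p(H_1, j) = 0$ whenever $kj > |V(H_1)|$, and similarly for $H_2$. This is the only combinatorial input of the proof.

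Then I would set $n_1 = |V(H_1)|$, $n_2 = |V(H_2)|$, and $n = n_1 + n_2$, and expand the product
\begin{equation*}
    m_k(H_1, x)\, m_k(H_2, x) \;=\; \Bigl(\sum_{j} (-1)^j p(H_1, j)\, x^{n_1 - kj}\Bigr) \Bigl(\sum_{l} (-1)^l p(H_2, l)\, x^{n_2 - kl}\Bigr).
\end{equation*}
Grouping terms with $j + l = i$, the exponent of $x$ becomes $n - ki$, and the signs combine to $(-1)^i$. Using the convolution identity above, the coefficient of $x^{n - ki}$ collapses to $(-1)^i p(H, i)$, which matches the definition of $m_k(H, x)$.

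I do not anticipate any real obstacle here, as the only substantive point is the bijection between matchings of $H$ and pairs of matchings of $H_1$ and $H_2$, which is immediate from vertex-disjointness of $H_1$ and $H_2$ (so no edge of $H$ can meet both parts, and matchings within each part remain disjoint when combined). The rest is routine polynomial arithmetic.
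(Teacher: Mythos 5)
Your proof is correct and is exactly the argument the paper has in mind (the paper simply asserts that the lemma "can be directly obtained from the definition of the matching polynomial" without writing out the convolution). The bijection between matchings of $H$ and pairs of matchings of $H_1$, $H_2$, together with the additivity of exponents ($n_1 - kj + n_2 - kl = n - ki$) and multiplicativity of signs, is all that is needed.
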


The proof of \Cref{lem:disconnected} can be directly obtained from the definition of the matching polynomial.

\begin{lemma}[Su, Kang, Li, and Shan~\cite{su-kang-li-shan}]\label{lem:su-kang-li-shan}
    Let $H$ be a $k$-graph and $v\in V(H).$  Let $e_1, \dots, e_m \in E(H)$ be the set of edges containing $v.$ Denote $V(e_i)$ as the set of vertices of $e_i$ for each $i\in [m].$ Then we have the following identity.
    $$\frac{m_k(H, x)}{m_k(H - v, x)} = x - \sum_{i\in [m]} \frac{m_k(H - V(e_i)), x}{m_k(H - v, x)}.$$
\end{lemma}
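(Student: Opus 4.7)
\begin{proofsketch}
The plan is to induct on $|V(H)|$. Writing $\Phi(H,v) \defeq m_k(H-v,x)/m_k(H,x)$, the claim is $\Phi(H,v) = \Phi(T(H,v), V)$. The base case $|V(H)| = 1$ is immediate: $H$ has no edges, $T(H,v)$ consists of the single vertex $V = (v)$, and both sides equal $1/x$.

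For the inductive step, let $e_1, \dots, e_m$ be the edges of $H$ containing $v$, with $e_i = \{v, u_{(i,1)}, \dots, u_{(i,k-1)}\}$ where $u_{(i,1)} \prec \cdots \prec u_{(i,k-1)}$. Applying \Cref{lem:su-kang-li-shan} to $(H,v)$ and using the telescoping decomposition $H - V(e_i) = (H-v) - u_{(i,1)} - \cdots - u_{(i,k-1)}$ gives
$$\Phi(H,v)^{-1} = x - \sum_{i=1}^m \prod_{j=1}^{k-1} \Phi\bigl(H - \{v, u_{(i,1)}, \dots, u_{(i,j-1)}\},\ u_{(i,j)}\bigr),$$
since each factor in the telescope is exactly a $\Phi$-ratio on a smaller hypergraph.

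On the walk-tree side, \Cref{obs:structure-walk-tree}(iii) tells me that the edges of $T(H,v)$ containing the root $V$ are exactly the hyperedges $\tilde{e}_i \defeq \{V, (u_{(i,1)}), \dots, (u_{(i,k-1)})\}$ for $i \in [m]$, and that $T(H,v) - V$ decomposes as the vertex-disjoint union of the subtrees $T_{i,j} \defeq T(H - \{v, u_{(i,1)}, \dots, u_{(i,j-1)}\},\ u_{(i,j)})$ (with the linear order inherited from $H$). Removing $V(\tilde{e}_i)$ additionally strips the root from each $T_{i,j}$, so by \Cref{lem:disconnected},
$$\frac{m_k(T(H,v) - V(\tilde{e}_i), x)}{m_k(T(H,v) - V, x)} = \prod_{j=1}^{k-1} \Phi\bigl(T_{i,j},\ (u_{(i,j)})\bigr).$$
Applying \Cref{lem:su-kang-li-shan} to $(T(H,v), V)$ therefore yields
$$\Phi(T(H,v), V)^{-1} = x - \sum_{i=1}^m \prod_{j=1}^{k-1} \Phi\bigl(T_{i,j},\ (u_{(i,j)})\bigr).$$
Since each $H - \{v, u_{(i,1)}, \dots, u_{(i,j-1)}\}$ has strictly fewer vertices than $H$, the induction hypothesis identifies $\Phi(T_{i,j},(u_{(i,j)}))$ with $\Phi(H - \{v, u_{(i,1)}, \dots, u_{(i,j-1)}\},\ u_{(i,j)})$, and comparing the two expressions completes the induction.

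The main obstacle is not the symbolic calculation but the careful reading of \Cref{obs:structure-walk-tree}(iii): I must be confident that the edges of $T(H,v)$ at $V$ are indexed by $e_1, \dots, e_m$ (once each, not once per permutation of the endpoints) and that the subtrees hanging off $V$ are exactly the $T_{i,j}$ with the inherited order. This is forced by the definition of conflict-free walks --- the set $C_1$ removes from future steps both $v$ and the $u_{(i,j')}$ with $j' < j$, so continuing a walk in $H$ past $u_{(i,j)}$ is equivalent to starting a fresh conflict-free walk in $H - \{v, u_{(i,1)}, \dots, u_{(i,j-1)}\}$ from $u_{(i,j)}$. Once this bijective correspondence is pinned down, the two applications of \Cref{lem:su-kang-li-shan} match term by term.
\end{proofsketch}
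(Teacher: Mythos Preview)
Your proposal proves the wrong statement. The lemma in question is the elementary recursion
\[
m_k(H,x) \;=\; x\,m_k(H-v,x) \;-\; \sum_{i\in[m]} m_k(H-V(e_i),x),
\]
which involves no walk-trees whatsoever. What you have sketched instead is a proof of \Cref{thm:walk-tree}, the identity $\Phi(H,v)=\Phi(T(H,v),V)$. In fact your argument invokes \Cref{lem:su-kang-li-shan} twice as a black box, so viewed as a proof of that lemma it is circular.

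The paper does not supply its own proof of \Cref{lem:su-kang-li-shan}; it is quoted from Su, Kang, Li, and Shan. The argument is a one-line partition: every matching of $H$ either misses $v$ (a matching of $H-v$) or contains a unique edge $e_i\ni v$ (so deleting $e_i$ gives a matching of $H-V(e_i)$). Hence $p(H,r)=p(H-v,r)+\sum_{i} p(H-V(e_i),r-1)$ for all $r$, and multiplying by $(-1)^r x^{n-kr}$ and summing yields the identity after noting the degree shifts $|V(H-v)|=n-1$ and $|V(H-V(e_i))|=n-k$.

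As an aside, your sketch is a faithful proof of \Cref{thm:walk-tree} and matches the paper's own argument for that theorem essentially line for line --- induction on $|V(H)|$, apply \Cref{lem:su-kang-li-shan} on both the $H$ side and the $T(H,v)$ side, telescope the ratio into a product of $\Phi$'s on smaller hypergraphs, and identify terms via \Cref{obs:structure-walk-tree}(iii) together with \Cref{lem:disconnected}. So the content is correct; it is simply attached to the wrong label.
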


\begin{proof}[Proof of \Cref{thm:walk-tree}]
    We use induction on the number of vertices of $H.$ If $V(H) \leq k$, then $H$ is edgeless or it has only one edge. Then by \Cref{obs:structure-walk-tree} $(ii),$ the theorem is true. Now we assume $V(H) > k.$ Let $e_1, \dots, e_m \in E(H)$ be the set of edges that contain $v$ such that $e_i = \{v, u_{(i, 1)}, \dots, u_{(i, k-1)}\}$ and $u_{(i, 1)} \prec \cdots \prec u_{(i, k-1)}$ for each $i\in [m].$ Then we have the following identities.
    \allowdisplaybreaks
    \begin{align*}
        \frac{m_k(H, x)}{m_k(H - v, x)} &= x - \sum_{i\in [m]} \frac{m_k(H - V(e_i), x)}{m_k(H - v, x)} &&\text{(by \Cref{lem:su-kang-li-shan})}\\
        &= x - \sum_{i\in [m]}\prod_{j\in [k-1]} \frac{m_k(H - \{v, u_{(i, 1)}, \dots, u_{(i, j)}\}, x)}{m_k(H - \{v, u_{(i, 1)}, \dots, u_{(i, j-1)}\}, x)}\\
        &= x - \sum_{i\in [m]}\prod_{j\in [k-1]} \frac{m_k(T(H - \{v, u_{(i, 1)}, \dots, u_{(i, j-1)}\}, u_{(i, j)}) - U_{(i, j)}, x)}{m_k(T(H - \{v, u_{(i, 1)}, \dots, u_{(i, j-1)}\}, U_{(i, j)}), x)} &&\text{(by induction)}\\
        &= x - \sum_{i\in [m]}\prod_{j\in [k-1]} \frac{m_k(T(H, v) - \{V, U_{(i, 1)}, \dots, U_{(i, j-1)}\}, x)}{m_k(T(H, v) - \{V, U_{(i, 1)}, \dots, U_{(i, j)}\}, x)} \\ &\text{\quad (by \Cref{obs:structure-walk-tree} and \Cref{lem:disconnected})}\\
        &= x - \sum_{i\in [m]} \frac{m_k(T(H, v) - V(E_i), x)}{m_k(T(H, v) - v, x)}\\
        &= \frac{m_k(T(H, v), x)}{m_k(T(H, v) - v, x)}. &&\text{(by \Cref{lem:su-kang-li-shan})}
    \end{align*}
    Here, $V = (v)$ and $U_{(i, j)}$ is the natural conflict-free walk starts from $v$ that correspond to the vertex $u_{(i, j)}$ for each $i\in [m]$ and $j\in [k-1].$ Similarly, $E_i$ is the edge of $T(H, v)$ that naturally corresponding with $e_i$ for each $i\in [m].$ 
    This completes the proof.
\end{proof}

By \eqref{eq:identity}, we also obtain the following from \Cref{thm:walk-tree}. $$\frac{q_k(H - v, x)}{q_k(H, x)} = \frac{q_k(T(H, v) - V, x)}{q_k(T(H, v), x)},$$ where $V = (v).$ Thus we deduce the following corollary.

\begin{corollary}\label{cor:probability-converting}
    Let $H$ be a $k$-graph and $v\in V(H)$, $V = (v).$ Then we have the following equality. 
    $$\calP_H(\overline{v}) = \calP_{T(H, v)}(\overline{V}).$$
\end{corollary}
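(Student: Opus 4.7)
The plan is to deduce the corollary directly from the matching-generating polynomial identity
$$\frac{q_k(H - v, x)}{q_k(H, x)} = \frac{q_k(T(H, v) - V, x)}{q_k(T(H, v), x)}$$
that the preceding paragraph has just extracted from \Cref{thm:walk-tree} via \eqref{eq:identity}. The only remaining work is to evaluate this identity at $x = 1$ and translate both ratios into probabilities.

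First I would observe that $q_k(H, 1) = \sum_{i \geq 0} p(H, i)$ is the total number of matchings of $H$, where the empty matching contributes the $i = 0$ term. The matchings of $H$ that do not cover $v$ are precisely the matchings of $H - v$, so $q_k(H - v, 1)$ counts them. Since $M$ is drawn uniformly from all matchings of $H$, this yields
$$\calP_H(\overline{v}) = \frac{q_k(H - v, 1)}{q_k(H, 1)},$$
and applying the same reasoning inside the $k$-walk-tree gives $\calP_{T(H, v)}(\overline{V}) = q_k(T(H, v) - V, 1)/q_k(T(H, v), 1)$. Setting $x = 1$ in the displayed identity then equates these two ratios, which is exactly the claim.

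There is no substantial obstacle here: \Cref{thm:walk-tree} does all the heavy lifting, \eqref{eq:identity} converts its signed matching-polynomial statement into the unsigned generating-function statement whose values at $x = 1$ are the matching counts we actually want, and the specialization $x = 1$ completes the picture. The only routine check is that the denominators are nonzero at $x = 1$, which is immediate because every hypergraph admits at least the empty matching, so $q_k(H, 1) \geq 1$ and $q_k(T(H, v), 1) \geq 1$.
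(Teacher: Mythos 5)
Your proposal is correct and follows exactly the route the paper intends: the paper states the $q_k$-identity in the paragraph before the corollary and simply says "thus we deduce," and your specialization at $x=1$ together with the observation that $q_k(\cdot,1)$ counts matchings (and that matchings of $H$ avoiding $v$ are matchings of $H-v$) is precisely the routine verification being left implicit. The nonvanishing of the denominators via the empty matching is a sensible extra check.
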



\section{Proof of \Cref{thm:main-tight}}\label{sec:proof-main-tight}

In this section, we prove \Cref{thm:main-tight} by using \Cref{cor:probability-converting}. We start with the following proposition.

\begin{proposition}\label{prop:recursive}
    Let $H$ be a $k$-graph, $v\in V(H),$ and we fix a linear ordering $\prec$ of $V(H).$ Let $e_1, \dots, e_m \in E(H)$ be the set of edges that contain $v$ such that $e_i = \{v, u_{(i, 1)}, \dots, u_{(i, k-1)}\}$ and $u_{(i, 1)} \prec \cdots \prec u_{(i, k-1)}$ for each $i\in [m].$ Then the following holds.

    \begin{equation*}
        \calP_H(\overline{v}) = \left(1 + \sum_{i\in [m]}\prod_{j\in [k-1]}\calP_{H-\{v, u_{(i, 1)}, \dots, u_{(i, j-1)}\}}\left(\overline{u_{(i, j)}}\right)\right)^{-1}
    \end{equation*}
\end{proposition}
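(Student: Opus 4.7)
The plan is to reduce \Cref{prop:recursive} to \Cref{lem:components-join} by passing through the walk-tree of \Cref{sec:define-tree}. The obstruction to applying \Cref{lem:components-join} directly to $H$ is that $H - v$ is typically not a disjoint union of sub-hypergraphs (edges of $H$ not incident to $v$ may share vertices across different $e_i$'s). However, the walk-tree $T(H,v)$ is precisely engineered so that $T(H,v) - V$ does split as a disjoint union of smaller walk-trees, which is exactly the hypothesis needed for \Cref{lem:components-join}.

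The first step is to fix the linear ordering $\prec$ of $V(H)$ coming from the statement (any such ordering works) and apply \Cref{cor:probability-converting} to translate
$$\calP_H(\overline{v}) = \calP_{T(H,v)}(\overline{V}),$$
where $V = (v)$ is the one-vertex conflict-free walk.

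The second step is to unpack the local structure of $T(H,v)$ at $V$ using \Cref{obs:structure-walk-tree}(iii). The edges of $T(H,v)$ incident to $V$ are the $m$ hyperedges $\{V, U_{(i,1)}, \ldots, U_{(i,k-1)}\}$ for $i \in [m]$, where $U_{(i,j)}$ denotes the root of the sub-walk-tree $T(H - \{v, u_{(i,1)}, \ldots, u_{(i,j-1)}\}, u_{(i,j)})$; moreover, removing $V$ from $T(H,v)$ leaves precisely the disjoint union of these sub-walk-trees over all $(i,j) \in [m] \times [k-1]$. I would then apply \Cref{lem:components-join} to the linear $k$-graph $T(H,v)$ at the vertex $V$, which yields
$$\calP_{T(H,v)}(\overline{V}) = \frac{1}{1 + \sum_{i\in[m]}\prod_{j\in[k-1]} \calP_{T(H - \{v, u_{(i,1)}, \ldots, u_{(i,j-1)}\}, u_{(i,j)})}(\overline{U_{(i,j)}})}.$$

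The final step is to apply \Cref{cor:probability-converting} once more, this time in reverse, to each factor in the product: for every $i \in [m]$ and $j \in [k-1]$,
$$\calP_{T(H - \{v, u_{(i,1)}, \ldots, u_{(i,j-1)}\}, u_{(i,j)})}(\overline{U_{(i,j)}}) = \calP_{H - \{v, u_{(i,1)}, \ldots, u_{(i,j-1)}\}}(\overline{u_{(i,j)}}),$$
which substituted above gives the claimed identity. I do not anticipate a real obstacle here; the only bookkeeping point is to check that the restriction of $\prec$ to each sub-hypergraph $H - \{v, u_{(i,1)}, \ldots, u_{(i,j-1)}\}$ is the ordering used when invoking \Cref{cor:probability-converting} on it, which is automatic. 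In effect, \Cref{prop:recursive} is the natural hypergraph analog of the textbook graph recursion that was lurking behind \Cref{lem:components-join}, and the walk-tree machinery is precisely what makes the recursion valid without any disjoint-union assumption on $H - v$.
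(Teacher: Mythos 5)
Your argument is correct and non-circular, but it takes a genuinely different route from the paper. The paper proves \Cref{prop:recursive} by a direct, elementary probability computation on $H$ itself: it partitions the event space according to whether $v$ is uncovered or which edge at $v$ lies in $M$, divides by $\calP_H(\overline{v})$, applies the chain rule of conditional probability, and then uses the fact that conditioning a uniform random matching on leaving a vertex set $S$ uncovered is the same as taking a uniform random matching of $H-S$; in particular the walk-tree and the linear ordering play no role there (the chain rule can be applied in any order). Your proof instead routes through \Cref{cor:probability-converting}, applies \Cref{lem:components-join} to the hypertree $T(H,v)$ at $V$ (legitimate, since a $k$-uniform hypertree is linear and \Cref{obs:structure-walk-tree}$(iii)$ gives exactly the disjoint-union hypothesis), and translates back. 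This is sound --- \Cref{thm:walk-tree} is proved via matching polynomials independently of \Cref{prop:recursive}, so there is no circularity --- but it somewhat inverts the paper's architecture: your intermediate display is precisely \Cref{lem:main-lemma}, which the paper \emph{derives from} \Cref{prop:recursive} combined with \Cref{cor:probability-converting}. What each approach buys: the paper's proof is self-contained, works without the matching-polynomial machinery, and is the logically "upstream" ingredient; yours makes the role of the walk-tree transparent and yields \Cref{lem:main-lemma} as a byproduct, at the cost of invoking heavier machinery for a statement that does not need it.
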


\begin{proof}[Proof of \Cref{prop:recursive}]
    We observe the probability of the random matching containing an edge $e_i$ is the same as the probability that the random matching $M$ does not contain all the vertices of $V(e_i)$ for each $i\in [m].$ Thus we have the following.

    \begin{equation*}
        \calP_H(\overline{v}) + \sum_{i\in [m]} \calP_H(e_i\in E(M)) = \calP_H(\overline{v}) + \sum_{i\in [m]} \calP_H\left(\overline{v}, \overline{u_{(i, 1)}}, \dots, \overline{u_{(i, k-1)}}\right) = 1.
    \end{equation*}

    By dividing $\calP_H(\overline{v})$, we obtain
    \begin{equation}\label{eq:1}
        1 + \sum_{i\in [m]} \calP_H\left(\overline{u_{(i, 1)}}, \dots, \overline{u_{(i, k-1)}}\middle| \overline{v}\right) = \calP_H(\overline{v})^{-1}.    
    \end{equation}
    
    By the chain rule of the conditional probability, we have 
    \begin{equation}\label{eq:2}
        \calP_H\left(\overline{u_{(i, 1)}}, \dots, \overline{u_{(i, k-1)}}\middle| \overline{v}\right) = \prod_{j\in [k-1]}\calP_H\left(\overline{u_{(i, j)}}\middle| \overline{v}, \overline{u_{(i, 1)}}, \dots, \overline{u_{(i, j-1)}}\right)    
    \end{equation}
    for each $i\in [m].$
    Finally, we note that 
    \begin{equation}\label{eq:3}
        \calP_H\left(\overline{u_{(i, j)}}\middle| \overline{v}, \overline{u_{(i, 1)}}, \dots, \overline{u_{(i, j-1)}}\right) = \calP_{H-\{v, u_{(i, 1)}, \dots, u_{(i, j-1)}\}}\left(\overline{u_{(i, j)}}\right)
    \end{equation}
    holds for each $i\in [m]$ and $j\in [k-1].$

    By combining \eqref{eq:1}--\eqref{eq:3}, we obtain the desired equality.
    This completes the proof.
\end{proof}

From now, we fix our $k$-graph $H$, $v\in V(H)$, and the linear order $\prec$ on $V(H).$ We denote by $V$ the one-vertex conflict-free walk $(v)$ and $T$ the $k$-walk-tree $T(H, v).$ For a conflict-free walk $U$ that starts from $v$, we write $T(U)$ for the sub-hypertree of $T$ rooted at $U.$

\begin{lemma}\label{lem:main-lemma}
    Let $E_1, \dots, E_m$ be the set of edges that contain $V$ in $T.$ Let $E_i = \{V, U_{(i, 1)}, \dots, U_{(i, k-1)}\}$ for each $i\in [m].$ Then the following holds.
    \begin{equation*}
        \calP_H(\overline{v}) = \calP_T(\overline{V}) = \frac{1}{1 + \sum_{i\in [m]}\prod_{j\in [k-1]}\calP_{T(U_{(i, j)})}\left(\overline{U_{(i, j)}}\right)}.
    \end{equation*}
\end{lemma}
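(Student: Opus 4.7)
The plan is to combine \Cref{cor:probability-converting} with \Cref{prop:recursive} applied to the $k$-walk-tree $T$, using the disjoint-union description of $T$ given by \Cref{obs:structure-walk-tree}(iii).

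First, \Cref{cor:probability-converting} yields $\calP_H(\overline{v}) = \calP_T(\overline{V})$ for free, so only the second equality needs work. By \Cref{obs:structure-walk-tree}(i) and \Cref{def:walk-tree}, $T$ is a $k$-uniform hypertree whose edges containing $V$ are exactly $E_1,\dots,E_m$ with $E_i = \{V, U_{(i,1)}, \dots, U_{(i,k-1)}\}$. Choosing any linear ordering of $V(T)$ compatible with the labeling $U_{(i,1)}, \dots, U_{(i,k-1)}$ and applying \Cref{prop:recursive} to the pair $(T,V)$ then produces
$$\calP_T(\overline{V}) = \left(1 + \sum_{i\in [m]}\prod_{j\in [k-1]}\calP_{T-\{V, U_{(i,1)}, \dots, U_{(i,j-1)}\}}\!\left(\overline{U_{(i,j)}}\right)\right)^{-1}.$$

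What remains is to identify each factor as $\calP_{T-\{V, U_{(i,1)}, \dots, U_{(i,j-1)}\}}(\overline{U_{(i,j)}}) = \calP_{T(U_{(i,j)})}(\overline{U_{(i,j)}})$. For this I would appeal to \Cref{obs:structure-walk-tree}(iii): $T$ is assembled from the disjoint union $\bigsqcup_{i'\in [m],\, j'\in [k-1]} T(U_{(i',j')})$ by adding only the hyperedges $E_1,\dots,E_m$, each of which contains $V$. Hence deleting $V$ together with its incident edges restores this disjoint union. The additional deletions $U_{(i,1)},\dots,U_{(i,j-1)}$ lie in the distinct components $T(U_{(i,1)}),\dots,T(U_{(i,j-1)})$ and therefore leave the component $T(U_{(i,j)})$ untouched. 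Invoking \Cref{obs:independent} on the resulting decomposition then equates the two probabilities.

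The only non-routine step is the disjoint-union description of $T - V$, and this is precisely the content of \Cref{obs:structure-walk-tree}(iii), which was stated in its particular form for exactly this kind of use. Once this structural identification is in hand, the lemma follows by direct substitution into the formula from \Cref{prop:recursive}.
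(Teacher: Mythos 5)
Your proposal is correct, and it reaches the formula by a route that is genuinely parallel to, but not the same as, the paper's. The paper applies \Cref{prop:recursive} to the original pair $(H,v)$, obtaining factors $\calP_{H-\{v,u_{(i,1)},\dots,u_{(i,j-1)}\}}(\overline{u_{(i,j)}})$, converts each of these to a walk-tree probability via \Cref{cor:probability-converting}, and then identifies the walk-tree $T(H-\{v,u_{(i,1)},\dots,u_{(i,j-1)}\},u_{(i,j)})$ with the sub-hypertree $T(U_{(i,j)})$ of $T$; that isomorphism (essentially \Cref{obs:structure-walk-tree}(iii)) is the delicate step there. You instead apply \Cref{prop:recursive} directly to $(T,V)$ and then only need that $T-V$ is the disjoint union of the root sub-hypertrees $T(U_{(i',j')})$, after which \Cref{obs:independent} localizes each factor to its own component; this is immediate from $T$ being a hypertree rooted at $V$ (or from \Cref{obs:structure-walk-tree}(iii)), and it avoids re-invoking the isomorphism between sub-hypertrees of $T$ and walk-trees of vertex-deleted hypergraphs. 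The only cosmetic point in your version is the ordering hypothesis in \Cref{prop:recursive}: as stated it asks for the vertices of each edge at the root to be listed in the fixed linear order, but its proof is just the chain rule and never uses the order, so your remark that a compatible ordering of $V(T)$ can be chosen is enough (and, in a hypertree, edges through $V$ meet only in $V$, so such an ordering exists). Both proofs rest on the same three ingredients --- \Cref{cor:probability-converting}, \Cref{prop:recursive}, and the structural decomposition of $T$ at its root --- they merely perform the recursion on opposite sides of the correspondence $H\leftrightarrow T$.
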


\begin{proof}[Proof of \Cref{lem:main-lemma}]
    Let $e_1, \dots, e_m \in E(H)$ be the set of edges that contain $v$ such that for each $i\in [m]$ and $j\in [k-1]$, the edge $e_i = \{v, u_{(i, 1)}, \dots, u_{(i, k-1)}\}$ and $u_{(i, 1)} \prec \cdots \prec u_{(i, k-1)}$, and $u_{(i, j)}$ is the conflict-free walk $(v, e_i, U_{(i, j)}).$
    
    By \Cref{cor:probability-converting} and \Cref{prop:recursive}, we have the following.
    \begin{equation*}
        \calP_H(\overline{v}) = \calP_T(\overline{V}) = \left(1 + \sum_{i\in [m]}\prod_{j\in [k-1]}\calP_{T(H-\{v, u_{(i, 1)}, \dots, u_{(i, j-1)}\}, u_{(i, j)})}\left(\overline{U'_{(i, j)}}\right)\right)^{-1},
    \end{equation*}
    where $U'_{(i, j)}$ is the conflict-free walk $(U_{(i, j)})$ in the $k$-walk-tree $T(H-\{v, u_{(i, 1)}, \dots, u_{(i, j-1)}\}, u_{(i, j)}).$ From the definition of the conflict-free walk and $k$-walk-tree, we observe that for each $i\in [m]$ and $j\in [k-1]$, the two hypertrees $T(U_{(i, j)})$ and $T(H-\{v, u_{(i, 1)}, \dots, u_{(i, j-1)}\}, u_{(i, j)})$ are isomorphic, where $U_i^{j}$ is mapped to $U'_{(i, j)}.$
    This completes the proof.
\end{proof}

We remark that by \Cref{obs:structure-walk-tree} $(ii),$ the $k$-walk-tree of hypertree is isomorphic to the original hypertree, thus we can recursively apply \Cref{lem:main-lemma} by starting from the leaves of the $k$-walk-tree to estimate $\calP_H(\overline{v}).$

\begin{proof}[Proof of \Cref{thm:main-tight}]
    Let $d$ be a sufficiently large non-negative integer.
    We apply \Cref{lem:main-lemma} to $H$, $v$, and $T.$ Since $H$ is a $d$-regular linear $k$-graph, by expanding the formula in the statement of \Cref{lem:main-lemma} up to the second level of hypertree $T$, we obtain the following.
    \begin{equation}\label{eq:final}
        \calP_H(\overline{v}) = \left(1 + \sum_{i\in [d]}\prod_{j\in [k-1]} \left(1 + \sum_{i'\in [d-1]}\prod_{j'\in [k-1]} \calP_{T(U_{i, i'}^{j, j'})}\left( \overline{U_{i, i'}^{j, j'}}\right) \right)^{-1}\right)^{-1},
    \end{equation}
    where for each $i\in [d], i'\in [d-1]$ and $j, j'\in [k-1]$, the walk $U_{i, i'}^{j, j'}$ is a conflict-free walk of length two starts from $v$.

    We observe that the right-hand side of \eqref{eq:final} is smaller than equal to the value obtained by replacing all the numbers $\calP_{T(U_{i, i'}^{j, j'})}\left( \overline{U_{i, i'}^{j, j'}}\right)$ by $1.$ Thus we obtain $$\calP_H(\overline{v}) \leq \left(1 + \sum_{i\in [d]}\prod_{j\in [k-1]} d^{-1} \right)^{-1} < 1 - \frac{1 - \ve}{d^{k-2}}.$$ 
    Similarly, the right-hand side of \eqref{eq:final} is greater than equal to the value obtained by replacing all the numbers $\calP_{T(U_{i, i'}^{j, j'})}\left( \overline{U_{i, i'}^{j, j'}}\right)$ by $0.$ Thus the following holds.
    $$\calP_H(\overline{v}) \geq \left(1 + \sum_{i\in [d]}\prod_{j\in [k-1]} 1 \right)^{-1} \geq \frac{1}{d+1}.$$ 
    This completes the proof.
\end{proof}


\section{Concluding remarks}\label{sec:concluding}
We note that in the proof of \Cref{thm:main-tight}, the inequality $\calP_{H}(\overline{v}) \geq \frac{1}{d+1}$ do not need the assumption that $d$ is sufficiently large and $k > 2.$ Thus the following corollary holds from the proof.

\begin{corollary}\label{cor:average}
    If $H$ is an $n$-vertex $d$-regular linear $k$-graph with $k \geq 2$ and $d \geq 1,$ then the average size of matchings in $H$ is at most $\left(1 - \frac{1}{d+1}\right)\frac{n}{k}.$ 
\end{corollary}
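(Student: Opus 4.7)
The plan is to convert the statement about the average matching size into a statement about per-vertex non-cover probabilities via double counting, and then invoke the lower bound $\calP_H(\overline{v}) \geq \frac{1}{d+1}$, which was established (essentially for free) during the proof of \Cref{thm:main-tight}.

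First I would let $M$ be a matching chosen uniformly at random from the set of all matchings of $H$. Since every edge of $M$ covers exactly $k$ distinct vertices, linearity of expectation gives
\[
\mathbb{E}[|M|] = \frac{1}{k}\sum_{v\in V(H)}\mathbb{P}(v\in M) = \frac{1}{k}\sum_{v\in V(H)}\bigl(1-\calP_H(\overline{v})\bigr).
\]
Once the bound $\calP_H(\overline{v})\geq \frac{1}{d+1}$ is available for every $v\in V(H)$, the right-hand side is at most $\frac{n}{k}\bigl(1-\frac{1}{d+1}\bigr)$, which is precisely the claimed bound.

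The only remaining step is to check that the inequality $\calP_H(\overline{v})\geq \frac{1}{d+1}$ actually holds under the weaker hypotheses $k\geq 2$ and $d\geq 1$, since \Cref{thm:main-tight} is stated only for $k>2$ and $d$ sufficiently large. For this I would apply \Cref{lem:main-lemma} directly to $H$ and $v$: the one-vertex conflict-free walk $V=(v)$ lies in exactly $d$ hyperedges of $T(H,v)$ because $H$ is $d$-regular, and since every $\calP_{T(U_{(i,j)})}(\overline{U_{(i,j)}})$ is a non-negative number at most $1$, one obtains
\[
\calP_H(\overline{v}) = \Bigl(1+\sum_{i\in[d]}\prod_{j\in[k-1]}\calP_{T(U_{(i,j)})}(\overline{U_{(i,j)}})\Bigr)^{-1} \geq \frac{1}{1+d}.
\]
This manipulation nowhere uses $k>2$ or $d\geq d_0$; it is exactly the trivial substitution ``replace the inner probabilities by $1$'' that already appears in the proof of \Cref{thm:main-tight}, and it is manifestly valid for all $k\geq 2$ and $d\geq 1$.

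There is no real obstacle here: the corollary is a one-line consequence of the machinery developed in \Cref{sec:proof-main-tight}. The only point that deserves a sentence of commentary is the verification that the lower-bound half of the argument for \Cref{thm:main-tight} survives the removal of the side assumptions; once that is noted, summing over $v\in V(H)$ and dividing by $k$ finishes the proof.
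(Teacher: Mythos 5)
Your proposal is correct and matches the paper's intended argument: the paper proves this corollary simply by remarking that the lower bound $\calP_H(\overline{v}) \geq \frac{1}{d+1}$ from the proof of \Cref{thm:main-tight} never uses $k>2$ or $d\geq d_0$, and then (implicitly) sums $1-\calP_H(\overline{v})$ over $v$ and divides by $k$ exactly as you do. Your slight variant of bounding at the first level of \Cref{lem:main-lemma} rather than the second is the same trivial substitution and changes nothing.
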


Although, \Cref{thm:main} shows that $\max\{\calP_H(\overline{v}) : v\in V(H)\}$ may be close to $1,$ it is stil possible that $\sum_{v\in V(H)} \calP_H(\overline{v}) = o_d(n).$ We believe this is true.

In this paper, we disprove \Cref{conj:kahn} for all $k > 2$ and provide a sharp bound for $\calP_H(\overline{v})$ for $d$-regular linear $k$-graph. The construction for the counterexample uses a limit of dynamical systems, thus the number of vertices in our hypergraph is very large compared with $d.$ The proof highly relies on the fact that $d$ is a fixed constant, so we can iteratively expand our hypergraph until we obtain our desired properties. Then what if $d$ also grows with the order of the hypergraph? In this case, we suspect that \Cref{conj:kahn} would be true for such values of $d.$

\begin{conjecture}\label{conj:polynomial}
    Let $k \geq 3$ be a positive integer and $0 < \delta, \ve < 1$ be a positive real number. Then there is $n_0$ such that the following is true for all $n \geq n_0.$
    Let $H$ be an $n$-vertex $k$-graph that satisfies the following. For all $v\in V(H)$, the degree of $v$ lies between $(1 - n^{-\delta})d$ and $(1 + n^{-\delta})d$ for some $d > n^{\ve}$, and the maximum codegree of $H$ is bounded above by $n^{-\delta} d.$ Then for all $v\in V(H),$ we have
    $$\calP_H(\overline{v}) = (1 + o_n(1))d^{-1/k}.$$
\end{conjecture}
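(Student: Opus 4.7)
The plan is to generalize the Kahn--Kim approach for graphs to hypergraphs by leveraging the walk-tree machinery of \Cref{sec:define-tree}. By \Cref{cor:probability-converting} and \Cref{lem:main-lemma}, it suffices to analyze $\calP_T(\overline{V})$, where $T = T(H,v)$ is the $k$-walk-tree rooted at $V = (v)$, and to propagate the recursion
\[
\calP_{T(U)}(\overline{U}) = \left(1 + \sum_{i}\prod_{j\in[k-1]} \calP_{T(U'_{i,j})}(\overline{U'_{i,j}})\right)^{-1}
\]
from the leaves of $T$ upward. Because the ideal infinite $d$-regular $k$-ary hypertree has root non-coverage probability equal to the fixed point $\alpha_d = (1 + o_d(1))d^{-1/k}$ of $g_d$, it is enough to show that, under the hypotheses on $H$, the recursion converges to $\alpha_d$ up to an $o(1)$ error.

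Next, I would use the nearly-regular degree condition and the small codegree bound to argue that $T$ is \emph{locally almost $d$-regular}: for every conflict-free walk $U$ in $H$ of length at most $L \defeq \lfloor \log_d n \rfloor$, the number of edges of $T$ rooted at $U$ is $(1 - o(1))d$. Indeed, the endpoint of $U$ has $(1 \pm n^{-\delta})d$ edges in $H$, while the previously visited vertices and the in-walk ``conflict sets'' $C_i$ can block at most $Lk \cdot n^{-\delta} d = o(d)$ of them since the codegree is bounded by $n^{-\delta}d$. The same codegree bound keeps short Berge-cycles rare enough that no substantial collision occurs along conflict-free walks up to depth $L$.

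With that local structure in hand, one sets the base values $\calP_{T(U)}(\overline{U})$ for $|U| = L$ to arbitrary numbers in $[0,1]$ (the true values), and then iterates the $(1 - o(1))d$-ary version of $g_d$ back up the tree, hoping for convergence to $\alpha_d$ within $L$ levels.

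The main obstacle is precisely the phenomenon enabling \Cref{thm:main}: the fixed point $\alpha_d$ of $g_d$ is unstable, with $|g_d'(\alpha_d)| \approx k - 1 > 1$ for $k \geq 3$, so a single-branch iteration amplifies errors and only the two attractors $\beta_d,\gamma_d$ of $f_d = g_d \circ g_d$ are stable. Running the recursion naively from adversarial leaf values expands errors by $(k-1)^L = n^{\Theta(\log(k-1)/\log d)}$, destroying any hope of convergence to $\alpha_d$. The resolution must exploit the fact that at each non-leaf level we average over $\approx d$ nearly-independent children before applying $g_d$, so fluctuations are damped by a factor $\sqrt{d}$ per level, which dominates the expansion factor $k-1$ once $d$ is large. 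Making this rigorous appears to require either a second-moment/entropy argument propagated along $T$, a correlation-decay argument that exploits the large codegree slack directly on $H$, or a martingale construction in the spirit of Kahn--Kim~\cite{kahn-kim} that avoids iterating the unstable map altogether; ruling out the Section 2 counterexample structure via the condition $d > n^{\ve}$ is the crucial combinatorial step that any such argument must perform.
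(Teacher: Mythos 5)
This statement is \Cref{conj:polynomial}, which the paper explicitly leaves open in \Cref{sec:concluding}; there is no proof in the paper to compare against, so any complete argument here would be a new result rather than a reconstruction. Your write-up is a research plan, not a proof, and you have in fact located the fatal gap yourself. The reduction via \Cref{cor:probability-converting} and \Cref{lem:main-lemma} to a recursion on the $k$-walk-tree is sound, and your computation that $|g_d'(\alpha_d)| \approx k-1 > 1$ for $k \geq 3$ is correct: the fixed point $\alpha_d$ is repelling for the single-branch iteration, which is precisely the mechanism behind the paper's counterexample in Section 2. Everything therefore hinges on your claim that averaging over $\approx d$ children per level damps fluctuations by $\sqrt{d}$, and this claim is unsupported. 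The children's values $\calP_{T(U)}(\overline{U})$ are deterministic quantities, not independent random variables; the paper's construction makes all $(d-1)(k-1)$ subtrees at every level isomorphic, so there is no averaging whatsoever and the recursion rides the unstable orbit of $f_d = g_d \circ g_d$ straight to the attractors $\beta_d$ and $\gamma_d$. A concentration heuristic cannot rule this out; one must show that the hypothesis $d > n^{\ve}$ (which bounds the usable depth of such a construction by roughly $O(1/\ve)$ levels, since each level multiplies the vertex count by about $(d-1)(k-1)$) combined with the codegree bound forbids the adversarial correlation of subtrees. You name candidate techniques (second moment, correlation decay, a Kahn--Kim-style martingale) but do not carry out any of them, and you acknowledge this.

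Two smaller points. First, your "locally almost $d$-regular to depth $L = \lfloor \log_d n\rfloor$" claim needs care: conflict-free walks may revisit vertices, the walk-tree can have depth up to $|E(H)| = nd/k$ rather than $\log_d n$, and truncating the recursion at depth $L$ with arbitrary boundary values in $[0,1]$ is exactly the step that the instability of $\alpha_d$ prevents you from justifying. Second, even granting near-$d$-regularity of $T$, the exact $d$-regular hypertree recursion started from adversarial leaf values does not converge to $\alpha_d$ (again by \Cref{lem:attractor}), so "local regularity plus iteration" cannot suffice on its own; some global input from $H$ beyond the walk-tree's branching statistics is required. In short: the approach is a reasonable opening move, but the conjecture remains unproven by this proposal, and the unresolved step is the entire difficulty.
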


If \Cref{conj:polynomial} is true, there are several interesting consequences. For instance, this implies that the average size of partial transversals in sufficiently large $n$-symbol Latin squares is $(1 - n^{-1/3} + o_n(1))n$, meaning that the average size of partial transversals hardly depends on the specific structure of the Latin squares. We can also obtain sharp estimates for average sizes of partial combinatorial designs in the hypergraphs if \Cref{conj:polynomial} is true.

\Cref{thm:walk-tree} generalize Godsil's celebrated identity~\cite{godsil} to hypergraphs. The matching polynomials and Godsil's path-trees have many applications in various areas such as statistical physics, we believe \Cref{thm:walk-tree} have the potential to generalize several results on matching polynomials on graphs into hypergraphs. 


\subsection*{Acknowledgement}
This research was performed during the author's visit to the Universit\"{a}t Hamburg. We thank the Universit\"{a}t Hamburg for its support, and hospitality, and for providing a great working environment. The author also thanks his advisor Jaehoon Kim for his helpful advice and encouragement.


\printbibliography

\end{document}